\documentclass[12pt, reqno]{amsart}
\usepackage{amsmath, amsthm, amscd, amsfonts, amssymb, graphicx, color}
\usepackage[bookmarksnumbered, colorlinks, plainpages]{hyperref}
\hypersetup{colorlinks=true,linkcolor=red,
anchorcolor=green, citecolor=cyan, urlcolor=red, filecolor=magenta, pdftoolbar=true}

\textheight 21.5truecm \textwidth 14.5truecm
\setlength{\oddsidemargin}{0.35in}\setlength{\evensidemargin}{0.35in}
\allowdisplaybreaks \setlength{\topmargin}{-.5cm}


\newtheorem{theorem}{Theorem}[section]
\newtheorem{lemma}[theorem]{Lemma}
\newtheorem{proposition}[theorem]{Proposition}
\newtheorem{corollary}[theorem]{Corollary}
\theoremstyle{definition}
\newtheorem{definition}[theorem]{Definition}

\newtheorem{remark}[theorem]{Remark}
\numberwithin{equation}{section}
\newcommand{\Mathcal}{\mathcal}
\newcommand{\MATHCAL}{\mathcal}

\begin{document}

\title[von Neumann-Schatten Bessel sequences]{
Multipliers for von Neumann-Schatten Bessel sequences in separable
Banach spaces}

\author[H. Javanshiri and M. Choubin]{Hossein Javanshiri and Mehdi Choubin}

\address{Department of Mathematics, Yazd University,
P.O. Box: 89195-741, Yazd, Iran }
\email{\textcolor[rgb]{0.00,0.00,0.84}{h.javanshiri@yazd.ac.ir}}
\address{Department of Mathematics, Velayat University, Iranshahr, Iran }
\email{\textcolor[rgb]{0.00,0.00,0.84}{m.choubin@gmail.com}}

\subjclass[2010]{Primary 46C50, 65F15, 42C15; Secondary 41A58, 47A58.}

\keywords{von Neumann-Schatten operator, Bessel sequence, Bessel multiplier, invertible frame multiplier, $g$-frame, dual frame.}


\begin{abstract}
In this paper we introduce the concept of von Neumann-Schatten Bessel multipliers in separable Banach spaces and obtain some of their properties. Finally, special attention is devoted to the study of invertible Hilbert-Schmidt frame multipliers.
These
results are not only of interest in their own right, but also they pave the way for obtaining
some new results for diagonalization of matrices in finite dimensional setting as well as for dual $HS$-frames. In particular, we show that a $HS$-frame is uniquely determined by the set of its dual $HS$-frames.
\end{abstract} \maketitle


\section{Introduction}

Due to the fundamental works done by
Feichtinger and his coauthors \cite{feichmulti,feichmulti1}, Fourier and Gabor
multipliers were formally introduced and popularized from then on.
Now the theory of Fourier and Gabor multipliers plays an important
role in theoretics and applications; For more information about
the history of this class of operators, some of their properties
and their applications in scientific disciplines and in modern
life the reader can consult Section 1 of the papers \cite{balaz11,bs} and the references (for examples) \cite{bened,cord,cord1,cord2,gro1}.
Balazs \cite{balaz3} extended the notion of Gabor multipliers to
arbitrary Hilbert spaces. In details, he considered
the operators of the form
$${\mathbf{M}}_{m,\Phi,\Psi}(f)=
\sum_{i=1}^\infty m_i\big<f,\psi_i\big>\phi_i
\quad\quad\quad\quad\quad\quad(f\in {\Bbb H}),$$ where
${\Phi}=\{{\phi}_i\}_{i=1}^\infty$ and $\Psi=\{\psi_i\}_{i=1}^\infty$
are ordinary Bessel sequences in Hilbert space $\Bbb H$, and
$m=\{m_i\}_{i=1}^\infty$ is a bounded complex scalar sequence in $\Bbb C$.
It
is worthwhile to mention that this class of operators is not only
of interest for applications in modern life, but also it is of
utmost importance in different branches of linear algebra, matrix analysis and functional analysis.
For example, they are used for the diagonalization of matrices \cite[Definition 3.1]{futa}, the diagonalization of
operators \cite{bhs,cord1,schatten-id} and for solving
approximation problems \cite{cord2,feichmulti0,gro1}. We also
recall that by the spectral theorem, every self-adjoint compact
operator on a Hilbert space can be represented as a multiplier
using an orthonormal system. In addition to all these, multipliers generalize the frame operators, approximately
dual frames \cite{app,japp}, generalized dual frames
\cite[Remark 2.8(ii)]{japp}, atomic systems for subspaces \cite{feichatom,jatom}
and frames for operators \cite{gavro}. Therefore, the study of Bessel
multipliers also leads us to new results concerning dual frames and
local atoms, two concepts at the core of frame theory.

Various generalization of Bessel multipliers have been introduced and
studied in a series of papers recently. This paper
continues these investigations. In details, we investigate Bessel multipliers for von
Neumann-Schatten Bessel sequences in separable Banach spaces.
Here it should be noted that,
just because the study
of this class of operators paves
the way for obtaining some new results for von Neumann-Schatten
frame \cite{von2,von1}, this inspires us to investigate this class of operators
in separable Banach spaces. Let
us recall that the von Neumann-Schatten frames in a separable
Banach space was first proposed by Arefijamaal and Sadeghi \cite{von1}
to deal with all the existing frames as a united object.
In fact, the von Neumann-Schatten frame is an extension of
$g$-frames for Hilbert spaces \cite{sun} and $p$-frames for Banach
spaces \cite{cs}, two important generalization of ordinary frames.


\section{von Neumann-Schatten $p$-Bessel sequences: an overview}

In this section, we give a brief overview of von Neumann-Schatten
$p$-Bessel sequences from
\cite{von1}. Nevertheless, we shall require some facts about the
theory of von Neumann-Schatten $p$-class of operators. For
background on this theory, we use \cite{ringrose} as
reference and adopt that book's notation. Moreover, our notation
and terminology are standard and, concerning frames in Hilbert respectively
Banach spaces, they are in general those of the book \cite{c} respectively the paper
\cite{cs}.

\subsection{von Neumann-Schatten $p$-class of operators}

Let ${\Bbb H}$ be a separable Hilbert space and let $(B({\Bbb H}),\|\cdot\|_{\mathrm{op}})$
denotes the $C^*$-algebra of all bounded linear operators on
${\Bbb H}$. For a compact operator $\mathcal A \in B({\Bbb H})$,
let $s_1(\mathcal A) \geq s_2(\mathcal A) \geq \cdots \geq 0$
denote the singular values of $\mathcal A$, that is, the
eigenvalues of the positive operator $|\mathcal A| = (\mathcal
A^*\mathcal A)^{1\over 2}$, arranged in a decreasing order and
repeated according to multiplicity. For $1 \leq p < \infty$, the
von Neumann-Schatten $p$-class $\mathcal{C}_p({\Bbb H})$ is defined to be
the set of all compact operators $\mathcal A$ for which
$\sum_{i=1}^{\infty} s_i^p(\mathcal A)<\infty$. For $\mathcal
A\in\mathcal{C}_p({\Bbb H})$, the von Neumann-Schatten $p$-norm of $\mathcal
A$ is defined by
\begin{equation}\label{vns-norm}
\Vert \mathcal A \Vert_{\mathcal{C}_p({\Bbb H})} = \Big( \sum_{i=1}^{\infty}
s_{i}^{p}(\mathcal A)\Big)^{1\over p}=\Big(\mathbf{tr}|\mathcal
A|^p\Big) ^{1\over p},
\end{equation}
where $\mathbf{tr}$ is the trace functional which defines as
$\mathbf{tr}(\mathcal A)= \sum_{e\in{\mathcal E}}\langle \mathcal
A(e),e\rangle$ and $\mathcal E$ is any orthonormal basis of $\Bbb H$.
The special case
$\mathcal{C}_1({\Bbb H})$ is called the trace class and $\mathcal{C}_2({\Bbb H})$ is
called the Hilbert-Schmidt class. Recall that an operator $\mathcal A$ is in $\mathcal{C}_p({\Bbb H})$
if and only if $\mathcal A^p\in \mathcal{C}_1({\Bbb H})$. In particular,
$\Vert \mathcal A \Vert_{\mathcal{C}_p({\Bbb H})}^{p}=\Vert \mathcal A^p
\Vert_{\mathcal{C}_1({\Bbb H})}$. It is proved that $\mathcal{C}_p({\Bbb H})$ is a two
sided $*$-ideal of $B({\Bbb H})$ and the finite rank operators are
dense in $(\mathcal{C}_p({\Bbb H}), \|\cdot\|_{\mathcal{C}_p({\Bbb H})})$. Moreover,
for $\mathcal A\in \mathcal{C}_p({\Bbb H})$, one has $\|\mathcal
A\|_{\mathcal{C}_p({\Bbb H})} = \|\mathcal A^*\|_{\mathcal{C}_p({\Bbb H})}$,
$\|\mathcal A\|_{\mathrm{op}}\leq\|\mathcal A\|_{\mathcal{C}_p({\Bbb H})}$ and if
$\mathcal B\in B(\Bbb H)$, then
$$\|\mathcal B\mathcal
A\|_{\mathcal{C}_p({\Bbb H})} \leq \|\mathcal B\|_{\mathrm{op}} \|\mathcal
A\|_{\mathcal{C}_p({\Bbb H})}\quad\quad\quad{\hbox{and}}\quad\quad\quad \|\mathcal A\mathcal B\|_{\mathcal{C}_p({\Bbb H})}
\leq \|\mathcal B\|_{\mathrm{op}}\|\mathcal A\|_{\mathcal{C}_p({\Bbb H})}.$$ In
particular, ${\mathcal{C}_p({\Bbb H})}\subseteq {\mathcal{C}_q({\Bbb H})}$ if $1\leq
p\leq q<\infty$.
We also recall that the
space $\mathcal{C}_2({\Bbb H})$ with the inner product
$[T,S]_{\mathbf{tr}}:=\mathbf{tr}(S^* T)$ is a Hilbert space.

Now, for a fixed $1\leq p<\infty$, we define the Banach space
\begin{align*}
\oplus
\mathcal{C}_p({\Bbb H})=\Big\{{\mathcal A}&=\{\mathcal
A_i\}_{i=1}^\infty:~\mathcal A_i\in \mathcal{C}_p({\Bbb H})\quad\forall
i\in{\Bbb N},~~{\hbox{and}}~\\ &\quad\quad\quad\quad\|{\mathcal A}\|_p:= \Big(
\sum_{i=1}^\infty\|\mathcal A_i\|_{\mathcal{C}_p({\Bbb H})}^{p}\Big)^{1\over p}<\infty
\Big\}.
\end{align*}
In particular, $\oplus\mathcal{C}_2({\Bbb H})$ is a Hilbert space with the inner
product
\[\langle {\mathcal A},{\mathcal A}'\rangle:=\sum_{i=1}^{\infty}[{\mathcal A}_i,{\mathcal A}'_i]_{\mathbf{tr}},\] and so
$\|{\mathcal A}\|_2^2=\langle {\mathcal A},{\mathcal A}\rangle$.

We conclude this subsection by recalling the notion of the tensor product of
two arbitrary elements of ${\Bbb H}$ which will be useful in our subsequent analysis. To this end, suppose that $x, y\in \Bbb H$ and
define the operator $x\otimes y$ on ${\Bbb H}$ by
\[\left( {x \otimes y} \right)\left( z \right) = \left\langle {z,y} \right\rangle x\quad\quad\quad(z\in{\Bbb H}).\]
It is obvious that $\|x\otimes y\| = \|x\| \|y\|$ and the rank of
$x\otimes y$ is one if $x$ and $y$ are non-zero. Moreover,
$$\|x\otimes
y\|_{\mathcal{C}_p({\Bbb H})}=\|x\|
\|y\|\quad\quad\quad {\hbox{and}}\quad\quad\quad\mathbf{tr}(x\otimes y)=\left\langle {{x},{y}}
\right\rangle.$$ Thus $x\otimes y$ is in ${\mathcal{C}_p({\Bbb H})}$ for all
$p\geq 1$. Furthermore, the following
equalities are easily verified:
\begin{align*}
\left( {{x} \otimes {x'}} \right)\left( {{y} \otimes {y'}}
\right) &= \left\langle {{y},{x'}} \right\rangle \left( {{x} \otimes {y'}} \right);\\
{\left( {x \otimes y} \right)^*} &= y \otimes x;\\
T\left( {x \otimes y} \right) &= T(x) \otimes y;\\
\left( {x \otimes y} \right)T &= x \otimes {T^*}(y),
\end{align*}
where $x',  y'
\in {\Bbb H}$ and $T\in B({\Bbb H})$. Clearly,
the operator $x \otimes x$ is a rank-one projection if and only
if $\left\langle {{x},{x}} \right\rangle = 1$,
that is, $x$ is a unit vector. Conversely, every rank-one projection is of the
form $x \otimes x$ for some unit vector $x$.

Having reached this state it remains to recall the definition and some properties
of von Neumann-Schatten $p$-frames for separable Banach spaces. This is the subject matter of the next subsection.

\subsection{von Neumann-Schatten $p$-frames}

To simplify the later discussion,
we make the following blanket assumption.\\

$\mathbf{Convention.}$ {\it For the rest of this paper we assume that
$\mathcal H$ is a Hilbert space with orthonormal basis ${\mathcal E}=\{e_i\}_{i\in I}$,
$1<p<\infty$ and $q$ is the conjugate
exponent to p, that is, $1/p+1/q=1$. Moreover, the notation ${\mathcal C}_p$ {\rm[}respectively, ${\mathcal C}_q${\rm]} is used to denote the
space ${\mathcal C}_p({\mathcal H})$ {\rm[}respectively, ${\mathcal C}_q({\mathcal H})${\rm]} without explicit reference to the Hilbert space $\mathcal H$.}\\

Recall from \cite{von1} that a countable family
$\mathcal G=\{\mathcal G_i\}_{i=1}^\infty$ of bounded linear
operators from separable Banach space $\mathcal X$ to $\mathcal{C}_p$ is a von Neumann-Schatten $p$-frame for
$\mathcal X$ with respect to $\mathcal H$ if
constants $0<A_{\mathcal G}\leq B_{\mathcal G}<\infty$ exist such that
\begin{equation}
A_{\mathcal G}\Vert f \Vert_{\mathcal X} \leq \left( \sum_{i=1}^\infty\Vert
{\mathcal G}_i(f)\Vert_{\mathcal{C}_p}^{p}\right)^{1\over p}\leq
B_{\mathcal G}\Vert f \Vert_{\mathcal X}
\end{equation}
for all $f \in \mathcal X$. It is called a von Neumann-Schatten
$p$-Bessel sequence with bound $B_{\mathcal G}$ if the second inequality holds.
In \cite{von1}, the authors have shown that the von
Neumann-Schatten $p$-frame condition is satisfied if and only if
$\left\{{\mathcal A}_i\right\}_{i=1}^\infty\mapsto
\sum_{i=1}^{\infty} {\mathcal A}_i \mathcal G_i$ is a well defined
mapping from $\oplus{\mathcal{C}_q}$ onto $\mathcal X^*$. Motivated by this fact, they considered the following operators:
\begin{gather}
U_{\mathcal G}:\mathcal X\rightarrow \oplus {\mathcal{C}_p};\quad
f\mapsto \left\{\mathcal
G_i(f)\right\}_{i=1}^\infty,\label{analysis}
\end{gather}
and
\begin{gather}
~~~~\quad T_{\mathcal G}:\oplus{\mathcal{C}_q}\rightarrow \mathcal X^*;\quad\left\{{\mathcal
A}_i\right\}_{i=1}^\infty\mapsto \sum_{i=1}^{\infty} {\mathcal
A}_i \mathcal G_i.\label{synthesis}
\end{gather}
As usual, the operator $U_{\mathcal G}$ is called the analysis
operator, and $T_{\mathcal G}$ is the synthesis operator of
$\mathcal G$.

Recall also from \cite{von1} that ${\mathcal G}$ is called a von
Neumann-Schatten $q$-Riesz basis for $\mathcal X^*$ with respect
to $\mathcal H$ if
\begin{enumerate}
\item $\{f\in \mathcal X : \mathcal G_i(f)= 0 \quad\forall i\in
\mathbb{N}\} = \{0\}$, \item there are positive constants $A_{\mathcal G}$ and
$B_{\mathcal G}$ such that for any finite subset $I\subseteq \mathbb{N}$ and
$\{\mathcal A_i\}_{i=1}^\infty\in \oplus \mathcal{C}_q$
\begin{equation*}
A_{\mathcal G}\left(\sum_{i\in I}\|\mathcal A_i\|_{\mathcal{C}_q}^q\right)^{1\over q}
\leq \left\|T_{\mathcal G}( \{A_i\}_{i=1}^\infty)\right\|_{{\mathcal X}^*}
\leq B_{\mathcal G}\left(\sum_{i\in I}\|\mathcal A_i\|_{\mathcal{C}_q}^q\right)^{1\over q}.
\end{equation*}
\end{enumerate}
The assumption of latter definition implies that
$\sum_{i=1}^\infty\mathcal A_i\mathcal G_i$ converges
unconditionally for all ${\mathcal A}=\{\mathcal A_i\}_{i=1}^\infty\in \oplus
\mathcal{C}_q$ and
\begin{equation}\label{11695}
A_{\mathcal G}\|\mathcal A\|_q \leq
\|T_{\mathcal G}(\mathcal A)\|_{{\mathcal X}^*}
\leq B_{\mathcal G}\|\mathcal A\|_q.
\end{equation}
Thus ${\mathcal G}\subseteq B(\mathcal
X,\mathcal{C}_p)$ is a von Neumann-Schatten $q$-Riesz basis for
$\mathcal X^*$ with respect to $\mathcal H$ if and only if the
operator $T_{\mathcal G}$ defined in \eqref{synthesis} is both
bounded and bounded below. Specially, in this case the operators $T_{\mathcal G}$ and
$U_{\mathcal G}$ are bijective.

The reader will remark that if
$\mathcal H={\Bbb C}$, then $B(\mathcal H)={\mathcal C}_p={\Bbb
C}$ and thus $\oplus \mathcal C_p=\ell^p$. Hence the above definitions is consistent with the
corresponding definitions in the concept of $p$-frames for
separable Banach spaces.

We conclude this section with the following result which can be proved with a similar argument as in the proof of \cite[Corollary 2.5]{cs}.

\begin{lemma}\label{riesz-frame}
Let $\mathcal X$ be a reflexive Banach space and let ${\mathcal
G}=\{{\mathcal G}_i\}_{i=1}^{\infty}\subseteq B(\mathcal
X,\mathcal{C}_p)$ be a von Neumann-Schatten $q$-Riesz basis for
$\mathcal X^*$ with respect to $\mathcal H$. If the $q$-Riesz basis
bounds of $\mathcal G$ are $A_{{\mathcal G}}$ and $B_{{\mathcal G}}$, then ${\mathcal G}$ is a von Neumann-Schatten
$p$-frame for $\mathcal X$ with $p$-frame bounds $A_{{\mathcal G}}$ and $B_{{\mathcal G}}$.
\end{lemma}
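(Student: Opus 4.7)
The plan is to exhibit the synthesis operator $T_{\mathcal G}$ as the Banach-space adjoint of the analysis operator $U_{\mathcal G}$ under the natural trace duality, and then to transfer the bounds in \eqref{11695} across this adjoint relation by a standard Hahn--Banach argument. Reflexivity of $\mathcal X$ is precisely what makes this transfer symmetric.

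My first step is to fix the isometric isomorphism $(\oplus\mathcal C_p)^*\cong\oplus\mathcal C_q$ induced by the trace pairing
\[
\langle\{\mathcal B_i\}_i,\{\mathcal A_i\}_i\rangle:=\sum_{i=1}^\infty \mathbf{tr}(\mathcal A_i\mathcal B_i),
\]
an operator-valued analogue of $(\ell^p)^*\cong\ell^q$ which reduces fibrewise to the known duality $\mathcal C_p^*\cong\mathcal C_q$. Under this pairing a short calculation yields
\[
[T_{\mathcal G}(\mathcal A)](f)=\sum_{i=1}^\infty \mathbf{tr}(\mathcal A_i\mathcal G_i(f))=\langle U_{\mathcal G}(f),\mathcal A\rangle
\]
for every $f\in\mathcal X$ and every $\mathcal A\in\oplus\mathcal C_q$, and reflexivity then lets one read this identity as $T_{\mathcal G}=U_{\mathcal G}^*$.

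Once this adjoint identification is in place, the right-hand $p$-frame inequality is immediate: $\|U_{\mathcal G}\|=\|U_{\mathcal G}^*\|=\|T_{\mathcal G}\|\le B_{\mathcal G}$. For the left-hand inequality I would invoke the fact recorded just before the lemma that the $q$-Riesz basis hypothesis makes $T_{\mathcal G}$ both bounded below by $A_{\mathcal G}$ and a bijection onto $\mathcal X^*$. Every $\phi\in\mathcal X^*$ thus has a unique preimage $\mathcal A\in\oplus\mathcal C_q$ with $\|\mathcal A\|_q\le A_{\mathcal G}^{-1}\|\phi\|_{\mathcal X^*}$, and the trace-H\"older estimate gives
\[
|\phi(f)|=|\langle U_{\mathcal G}(f),\mathcal A\rangle|\le \|\mathcal A\|_q\|U_{\mathcal G}(f)\|_p\le A_{\mathcal G}^{-1}\|\phi\|_{\mathcal X^*}\|U_{\mathcal G}(f)\|_p.
\]
Taking the supremum over $\|\phi\|_{\mathcal X^*}\le 1$ and using $\|f\|_{\mathcal X}=\sup_{\|\phi\|\le 1}|\phi(f)|$ then delivers $A_{\mathcal G}\|f\|_{\mathcal X}\le\|U_{\mathcal G}(f)\|_p$, as required.

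The main obstacle I anticipate is not the $p$-frame estimates themselves but the structural verification of the duality $(\oplus\mathcal C_p)^*\cong\oplus\mathcal C_q$ and of the adjoint identity $T_{\mathcal G}=U_{\mathcal G}^*$; both rely on the trace identities recalled in Section~2 together with a careful interchange of summations, and it is exactly at the adjoint identification that reflexivity of $\mathcal X$ enters, ensuring that dualizing $T_{\mathcal G}$ returns to $\mathcal X$ rather than to $\mathcal X^{**}$. Once these ingredients are secured, the proof parallels the argument for \cite[Corollary~2.5]{cs}.
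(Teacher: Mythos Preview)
Your proposal is correct and follows exactly the route the paper indicates: the paper gives no proof but simply refers to \cite[Corollary~2.5]{cs}, and your argument---identifying $T_{\mathcal G}$ with $U_{\mathcal G}^*$ via the trace duality $(\oplus\mathcal C_p)^*\cong\oplus\mathcal C_q$ and then transferring the Riesz bounds---is precisely that argument transported to the von Neumann--Schatten setting. One small refinement worth making explicit: since a priori you do not yet know $U_{\mathcal G}(f)\in\oplus\mathcal C_p$, it is cleaner to start from the bounded operator $T_{\mathcal G}$, compute $T_{\mathcal G}^*:\mathcal X^{**}\to(\oplus\mathcal C_q)^*$, and use reflexivity of $\mathcal X$ together with $(\oplus\mathcal C_q)^*\cong\oplus\mathcal C_p$ to \emph{deduce} that $T_{\mathcal G}^*=U_{\mathcal G}$ is well defined and bounded, rather than writing $\langle U_{\mathcal G}(f),\mathcal A\rangle$ before this is established.
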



\section{von Neumann-Schatten Bessel multipliers: Basic results}

All over in this section $\mathcal X_1$ and $\mathcal X_2$ are
separable Banach spaces and the space $\ell^r$ ($1\leq r\leq\infty$) has its
usual meanings.

Our starting point of this section is the following lemma which play
a crucial rule in this paper.

\begin{lemma}\label{upper-b}
Let ${\mathcal G}\subseteq
B(\mathcal X_1^*,\mathcal{C}_p)$ be a von Neumann-Schatten
$p$-Bessel sequence with bound $B_{\mathcal G}$ and ${\mathcal
F}\subseteq B(\mathcal
X_2,\mathcal{C}_q)$ be a von Neumann-Schatten $q$-Bessel sequence
with bound $B_{\mathcal F}$. If ${\frak m}=\{{\frak m}_i\}_{i=1}^{\infty}\in
\ell^\infty$, then the operator
$\mathbf{M}_{{\frak m},\mathcal F,\mathcal G}: \mathcal X_1^*
\rightarrow \mathcal X_2^*$ defined by
\[\mathbf{M}_{{\frak m},\mathcal F,\mathcal G}(f)=\sum_{i=1}^{\infty} {\frak m}_i
\mathcal G_i(f)\mathcal F_i\quad\quad\quad(f\in \mathcal X_1^*),\]
is well-defined and $\|\mathbf{M}_{{\frak m},\mathcal F,\mathcal
G}\|_{\mathrm{op}}\leq B_{\mathcal F} B_{\mathcal G} \|{\frak m}\|_\infty$.
\end{lemma}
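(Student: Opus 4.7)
The plan is to show both well-definedness and the norm bound by testing the operator against an arbitrary $g\in\mathcal X_2$ and invoking the trace-duality between $\mathcal C_p$ and $\mathcal C_q$ together with the two Bessel inequalities.

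First I would clarify how $\mathcal G_i(f)\mathcal F_i$ is to be read as an element of $\mathcal X_2^*$. Since $\mathcal G_i(f)\in\mathcal C_p$ and $\mathcal F_i(g)\in\mathcal C_q$ for each $g\in\mathcal X_2$, the natural interpretation, consistent with the synthesis operator \eqref{synthesis}, is
\[
\bigl(\mathcal G_i(f)\mathcal F_i\bigr)(g)=\mathbf{tr}\bigl(\mathcal G_i(f)\mathcal F_i(g)\bigr),\qquad g\in\mathcal X_2.
\]
The Schatten H\"older inequality $|\mathbf{tr}(AB)|\leq \|A\|_{\mathcal C_p}\|B\|_{\mathcal C_q}$ then makes $\mathcal G_i(f)\mathcal F_i$ a bounded linear functional on $\mathcal X_2$ of norm at most $\|\mathcal G_i(f)\|_{\mathcal C_p}\|\mathcal F_i\|_{\mathrm{op}}\|g\|_{\mathcal X_2}$, but more importantly it will give the sharper pointwise estimate I need.

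Next, for $f\in\mathcal X_1^*$, $g\in\mathcal X_2$ and $N>M\geq 1$, I would estimate the partial-sum tails by
\[
\Biggl|\sum_{i=M}^{N}\mathfrak m_i\,\mathbf{tr}\bigl(\mathcal G_i(f)\mathcal F_i(g)\bigr)\Biggr|
\leq \|\mathfrak m\|_\infty\sum_{i=M}^{N}\|\mathcal G_i(f)\|_{\mathcal C_p}\,\|\mathcal F_i(g)\|_{\mathcal C_q},
\]
and then apply the scalar H\"older inequality with exponents $p$ and $q$ followed by the two Bessel bounds:
\[
\sum_{i=M}^{N}\|\mathcal G_i(f)\|_{\mathcal C_p}\|\mathcal F_i(g)\|_{\mathcal C_q}
\leq \Bigl(\sum_{i=M}^{N}\|\mathcal G_i(f)\|_{\mathcal C_p}^{p}\Bigr)^{1/p}\Bigl(\sum_{i=M}^{N}\|\mathcal F_i(g)\|_{\mathcal C_q}^{q}\Bigr)^{1/q}.
\]
As $M\to\infty$, the first factor tends to $0$ because $\{\mathcal G_i(f)\}\in \oplus\mathcal C_p$ (von Neumann--Schatten $p$-Bessel property of $\mathcal G$), while the second factor stays bounded by $B_{\mathcal F}\|g\|_{\mathcal X_2}$ (von Neumann--Schatten $q$-Bessel property of $\mathcal F$). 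Thus the partial sums $S_N(f)(g)$ form a Cauchy net uniformly for $g$ in the unit ball of $\mathcal X_2$, which shows that $\mathbf M_{\mathfrak m,\mathcal F,\mathcal G}(f)$ exists as a norm limit in $\mathcal X_2^*$.

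Finally, taking $M=1$ and $N\to\infty$ in the same estimate and invoking the full Bessel inequalities yields
\[
|\mathbf M_{\mathfrak m,\mathcal F,\mathcal G}(f)(g)|
\leq \|\mathfrak m\|_\infty B_{\mathcal G}B_{\mathcal F}\,\|f\|_{\mathcal X_1^*}\|g\|_{\mathcal X_2},
\]
whence the operator norm bound follows after taking the supremum over $\|g\|_{\mathcal X_2}\leq 1$. Linearity in $f$ is immediate from the linearity of each $\mathcal G_i$ and interchange with the (norm-convergent) series. I do not expect a real obstacle here: the proof is a clean two-step H\"older argument, and the only point that requires a bit of care is recording explicitly that the interpretation of $\mathcal G_i(f)\mathcal F_i$ as a functional via the trace pairing matches the convention used in \eqref{synthesis}, so that the Schatten H\"older inequality can be legitimately applied to control the series termwise.
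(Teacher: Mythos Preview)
Your proof is correct and follows essentially the same route as the paper: both interpret the action via the trace pairing, bound the partial sums by the Schatten H\"older inequality followed by the scalar H\"older inequality and the two Bessel bounds, and take the supremum over the unit ball of $\mathcal X_2$. The only cosmetic difference is that the paper records well-definedness by writing $\mathbf{M}_{\mathfrak m,\mathcal F,\mathcal G}(f)=T_{\mathcal F}(\{\mathfrak m_i\mathcal G_i(f)\}_i)$ and invoking the boundedness of the synthesis operator, whereas you spell out the same fact as a Cauchy-in-norm argument for the partial sums.
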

\begin{proof}
It is easy to check that $\{{\frak m}_i {\mathcal
G}_i(f)\}_{i=1}^\infty\in \oplus\mathcal{C}_p$ for all $f\in \mathcal X_1^*$.
On the other hand
\[\mathbf{M}_{{\frak m},\mathcal F,\mathcal G}(f)=T_{\mathcal F}\Big(\{{\frak m}_i
\mathcal G_i(f)\}_{i=1}^\infty\Big)\quad\quad\quad (f\in \mathcal X_1^*).\] It
follows that $\mathbf{M}_{{\frak m},\mathcal F,\mathcal G}$ is
well defined. Moreover, we observe that
\begin{align*}
\Big\|\sum_{i=1}^{n} {\frak m}_i \mathcal G_i(f)\mathcal
F_i\Big\|_{\mathrm{op}}&=\sup_{h\in \mathcal X_2, \|h\|\leq 1}
\Big\{{\Big|}\sum_{i=1}^n{\mathbf{tr}}{\Big(}{\frak m}_i{\mathcal G}_i(f){\mathcal F}_i(h){\Big)}{\Big|}\Big\}\\
&\leq\sup_{h\in X_2, \|h\|\leq 1}
\Big\{\sum_{i=1}^n{\Big|}{\mathbf{tr}}{\Big(}{\frak m}_i{\mathcal G}_i(f){\mathcal F}_i(h){\Big)}{\Big|}\Big\}\\
&\leq\sup_{h\in X_2, \|h\|\leq 1}
\Big\{\sum_{i=1}^n{\Big\|}{\frak m}_i{\mathcal G}_i(f){\mathcal F}_i(h){\Big\|}_{\mathcal{C}_1}\Big\}\\
&\leq\sup_{h\in X_2, \|h\|\leq 1}
\Big\{\sum_{i=1}^n\|{\frak m}_i{\mathcal G}_i(f)\|_{\mathcal{C}_p}\|{\mathcal F}_i(h)\|_{\mathcal{C}_q}\Big\}\\
&\leq \|{\frak m}\|_\infty \Big( \sum_{i=1}^n\Vert {\mathcal G}_i(f)\Vert_{\mathcal{C}_p}^{p}\Big)^{1\over p} \sup_{h\in X_2, \|h\|\leq 1}\Big( \sum_{i=1}^n\Vert {\mathcal F}_i(h)\Vert_{\mathcal{C}_q}^{q}\Big)^{1\over q}\\
&\leq B_{\mathcal F} B_{\mathcal G} \|{\frak m}\|_\infty \|f\|,
\end{align*}
for all $n\in{\Bbb N}$ and $f\in X_1^*$. From this, we can deduced
that the operator $\mathbf{M}_{{\frak m},{\mathcal F},{\mathcal
G}}$ is bounded with $B_{\mathcal F} B_{\mathcal G} \|{\frak m}\|_\infty$.
\end{proof}


Now we are in position to introduce the main object of study of
this work.

\begin{definition}\label{p-q-multi}
Let ${\mathcal G}\subseteq B(\mathcal
X_1^*,\mathcal{C}_p)$ be a von Neumann-Schatten $p$-Bessel
sequence, and let ${\mathcal F}\subseteq
B(\mathcal X_2,\mathcal{C}_q)$ be a von Neumann-Schatten
$q$-Bessel sequence. Let ${\frak m}\in
\ell^\infty$. The operator
$\mathbf{M}_{{\frak m},{\mathcal F},{\mathcal G}}: \mathcal X_1^*
\rightarrow \mathcal X_2^*$ defined by
\[\mathbf{M}_{{\frak m},{\mathcal F},{\mathcal G}}(f)=\sum_{i=1}^{\infty} {\frak m}_i {\mathcal G}_i(f){\mathcal F}_i\quad\quad\quad(f\in\mathcal X_1^*)\]
is called von Neumann-Schatten ($p,q$)-Bessel multiplier and the
sequence ${\frak m}$ is called its symbol.
\end{definition}


If $\frak m$ is a sequence in $\ell^r$
($1\leq r\leq\infty$), then the mapping
$${\mathcal M}_{p,{\frak
m}} : \oplus\mathcal{C}_p \rightarrow \oplus\mathcal{C}_p;~\{\mathcal A_i\}_{i=1}^\infty\mapsto
\{{\frak m}_i \mathcal
A_i\}_{i=1}^\infty,$$ is well-defined and bounded. Hence, the von
Neumann-Schatten ($p,q$)-Bessel multiplier $\mathbf{M}_{{\frak
m},{\mathcal F},{\mathcal G}}$ can be written as
$$\mathbf{M}_{{\frak m},{\Mathcal F},{\Mathcal G}} = T_{\Mathcal
F} {\mathcal M}_{p,{\frak m}} U_{\Mathcal G}.$$

This equality paves the way for the study of some operator properties of
$\mathbf{M}_{{\frak m},{\Mathcal F},{\Mathcal G}}$ in terms of the properties of
${\mathcal M}_{p,{\frak m}}$. To this end, we need the following remark and lemma.

\begin{remark}\label{basis}
Recall from \cite{ringrose} that
$\{e_n\otimes e_m:~n, m\in I\}$ is an orthonormal basis of
$\mathcal C_2$. For the convenience
of citation and a better exposition we denote by $\{\mathcal
E_k\}$ the orthonormal basis of ${\mathcal C}_2$. Hence, putting
$$F_{i,k}=\{\delta_{i,j}\mathcal{E}_k\}_{j=1}^\infty\quad\quad\quad
{\Big(}i, k\in{\Bbb N}{\Big)},$$
one can see that
\begin{enumerate}\item in the case where $\dim {\mathcal H}=N$, then ${\Big\{
\{F_{i,k}\}_{k=1}^{{N^2}}\Big\}}_{i=1}^\infty$ is an
orthonormal basis for $\oplus \mathcal{C}_2$.
\item in the case where ${\mathcal H}$ is an infinite
dimensional Hilbert space, then ${\Big\{
\{F_{i,k}\}_{k=1}^{\infty}\Big\}}_{i=1}^\infty$ is an
orthonormal basis for $\oplus \mathcal{C}_2$.
\end{enumerate}
\end{remark}

In what follows, the notation $\overline{{\frak m}}$ is used to denote the sequence $\{\overline{{\frak m}}_i\}_{i=1}^\infty$, where ${\overline{{\frak m}}_i}$
refers to the complex conjugate of ${\frak m}_i$.

\begin{lemma}\label{sym-m}
The following assertions hold.
\begin{enumerate}
\item If ${\frak m}\in\ell^\infty$, then
$\|{\MATHCAL M}_{p,{\frak m}}\|_{\mathrm{op}}=\|{\frak
m}\|_\infty$. \item ${\MATHCAL M}_{2,{\frak m}}^*={\MATHCAL
M}_{2,\overline{{\frak m}}}$. \item
If $\dim \mathcal H=N$ and ${\frak m}\in\ell^p$, then ${\MATHCAL
M}_{2,{\frak m}}\in{\mathcal C}_p(\oplus
\mathcal{C}_2)$ and $$\|{\MATHCAL M}_{2,{\frak
m}}\|_{\mathcal{C}_p(\oplus
\mathcal{C}_2)}=N^2\|{\frak m}\|_p.$$
\end{enumerate}
\end{lemma}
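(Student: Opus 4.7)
The plan is to handle the three parts as essentially independent elementary computations, each exploiting the fact that $\mathcal{M}_{p,\mathfrak{m}}$ acts componentwise by scalar multiplication.

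For (1), I would first obtain the upper bound $\|\mathcal{M}_{p,\mathfrak{m}}\|_{\mathrm{op}}\le\|\mathfrak{m}\|_\infty$ from the direct estimate
\[\|\mathcal{M}_{p,\mathfrak{m}}(\mathcal{A})\|_p^p = \sum_{i=1}^{\infty}|\mathfrak{m}_i|^p\,\|\mathcal{A}_i\|_{\mathcal{C}_p}^p \le \|\mathfrak{m}\|_\infty^p\,\|\mathcal{A}\|_p^p.\]
The matching lower bound follows by testing on sequences concentrated in a single coordinate: for each fixed $i_0\in\mathbb{N}$ and any $\mathcal{B}\in\mathcal{C}_p$ with $\|\mathcal{B}\|_{\mathcal{C}_p}=1$, the sequence $\mathcal{A}^{(i_0)}:=\{\delta_{i_0,j}\mathcal{B}\}_{j=1}^{\infty}$ has unit $\oplus\mathcal{C}_p$-norm and $\|\mathcal{M}_{p,\mathfrak{m}}(\mathcal{A}^{(i_0)})\|_p=|\mathfrak{m}_{i_0}|$, so taking the supremum over $i_0$ yields $\|\mathcal{M}_{p,\mathfrak{m}}\|_{\mathrm{op}}\ge\|\mathfrak{m}\|_\infty$.

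For (2), I would use the Hilbert-space inner product on $\oplus\mathcal{C}_2$ and simply move the scalar $\mathfrak{m}_i$ across the trace inner product $[\cdot,\cdot]_{\mathbf{tr}}$ as a complex conjugate: for $\mathcal{A},\mathcal{A}'\in\oplus\mathcal{C}_2$,
\[\langle\mathcal{M}_{2,\mathfrak{m}}\mathcal{A},\mathcal{A}'\rangle = \sum_{i=1}^{\infty}\mathfrak{m}_i[\mathcal{A}_i,\mathcal{A}'_i]_{\mathbf{tr}} = \sum_{i=1}^{\infty}[\mathcal{A}_i,\overline{\mathfrak{m}_i}\mathcal{A}'_i]_{\mathbf{tr}} = \langle\mathcal{A},\mathcal{M}_{2,\overline{\mathfrak{m}}}\mathcal{A}'\rangle,\]
and uniqueness of the Hilbert space adjoint forces $\mathcal{M}_{2,\mathfrak{m}}^*=\mathcal{M}_{2,\overline{\mathfrak{m}}}$.

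For (3), I would invoke Remark~\ref{basis}(1), which supplies the orthonormal basis $\{F_{i,k}\}_{i\in\mathbb{N},\,1\le k\le N^2}$ of $\oplus\mathcal{C}_2$ with $F_{i,k}=\{\delta_{i,j}\mathcal{E}_k\}_{j=1}^{\infty}$. The defining formula immediately gives $\mathcal{M}_{2,\mathfrak{m}}F_{i,k}=\mathfrak{m}_iF_{i,k}$, so $\mathcal{M}_{2,\mathfrak{m}}$ is diagonal in this basis with each eigenvalue $\mathfrak{m}_i$ occurring with multiplicity $N^2$. Consequently $|\mathcal{M}_{2,\mathfrak{m}}|$ has singular values $|\mathfrak{m}_i|$ with the same multiplicities, and (using $\mathfrak{m}\in\ell^p$ both to rearrange in decreasing order and to conclude compactness) the Schatten $p$-norm computes as
\[\|\mathcal{M}_{2,\mathfrak{m}}\|_{\mathcal{C}_p(\oplus\mathcal{C}_2)}^p = \sum_{i=1}^{\infty}N^2\,|\mathfrak{m}_i|^p = N^2\,\|\mathfrak{m}\|_p^p.\]
The main obstacle is really only the bookkeeping in (3): one must verify that the $F_{i,k}$ genuinely diagonalise $\mathcal{M}_{2,\mathfrak{m}}$ (which is transparent from their definition) and that $\mathfrak{m}\in\ell^p$ ensures a legitimate rearrangement of the multiset $\{|\mathfrak{m}_i|\}$ with the prescribed multiplicity so that the Schatten $p$-norm is well defined.
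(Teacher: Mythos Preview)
Your proof is correct and follows essentially the same approach as the paper: the test sequences in (1), the sesquilinearity computation in (2), and the diagonalization in the orthonormal basis $\{F_{i,k}\}$ in (3) all match the paper's argument, with your version of (3) simply reading off the eigenvalues and multiplicities directly rather than relabelling to a single-indexed system $\{\hat F_j\}$ and $\hat{\mathfrak m}$ as the paper does. Note that both your computation and the paper's yield $\|\mathcal{M}_{2,\mathfrak{m}}\|_{\mathcal{C}_p}^p = N^2\|\mathfrak{m}\|_p^p$, i.e.\ $\|\mathcal{M}_{2,\mathfrak{m}}\|_{\mathcal{C}_p} = N^{2/p}\|\mathfrak{m}\|_p$, so the exponent in the displayed formula of the statement should really be $N^{2/p}$.
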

\begin{proof}
(1) That $\|{\MATHCAL M}_{p,{\frak m}}\|_{\mathrm{op}}\leq
\|{\frak m}\|_\infty$ is trivial. In order to prove that $\|{\frak
m}\|_\infty\leq\|{\MATHCAL M}_{p,{\frak m}}\|_{\mathrm{op}}$,
suppose that $j\in {\Bbb N}$ and $x\in \mathcal H$ with $\|x\|=1$.
Observe that, if ${\mathcal A}^{(j)}=\{\delta_{i,j}\cdot x\otimes x\}_{i=1}^\infty$, then $\|{\mathcal A}^{(j)}\|_{p}=1$ and thus
$$
\|{\MATHCAL M}_{p,{\frak m}}\|_{\mathrm{op}}\geq \|{\frak m}
{\mathcal A}^{(j)}\|_p\geq\|{\frak m}_j\cdot x\otimes
x\|_{\mathcal{C}_p}=|{\frak m}_j|.
$$
It follows that $\|{\frak
m}\|_{\infty}\leq
\|{\MATHCAL M}_{p,{\frak m}}\|_{\mathrm{op}}$.

(2) It is suffices to note that if ${\mathcal A},{\mathcal B}\in
\oplus\mathcal{C}_2$, then
\begin{align*}
\langle {\MATHCAL M}_{2,{\frak m}} {\mathcal A},{\mathcal
B}\rangle&=\sum_{i=1}^\infty [{\frak m}_i{\mathcal A}_i,{\mathcal
B}_i]_{\mathbf{tr}}\\&=\sum_{i=1}^\infty [{\mathcal A}_i,\overline{{\frak m}}_i{\mathcal
B}_i]_{\mathbf{tr}}\\&=\langle {\mathcal A},{\MATHCAL M}_{2,
\overline{{\frak m}}}{\mathcal B}\rangle.
\end{align*}

(3) If $\mathcal A\in \oplus
\mathcal{C}_2$, then we have
\begin{align*}
{\MATHCAL M}_{2,{\frak m}}(\mathcal A)&={\MATHCAL M}_{2,{\frak
m}}\left(\sum_{i=1}^\infty\sum_{k=1}^{N^2}\big<\mathcal
A,F_{i,k}\big>F_{i,k}\right)\\
&=\sum_{k=1}^{N^2}\sum_{i=1}^\infty{\frak m}_i\big<\mathcal
A,F_{i,k}\big>F_{i,k}.
\end{align*}
Hence, if we set
$$\hat{F}_j=F_{l+1,j-l{N^2}}\quad\quad\quad\quad
\Big(l\in{\Bbb {N}}\cup\{0\},~ l{N^2}+1\leq
j\leq(l+1){N^2}\Big),$$ and
$$\hat{{\frak m}}=\{\hat{{\frak m}}_j\}_{j=1}^\infty=\{\underbrace{{\frak
m}_1,\cdots,{\frak m}_1}_{N^2},\underbrace{{\frak
m}_2,\cdots,{\frak m}_2}_{N^2},\cdots\},$$ then we observe that
$${\MATHCAL M}_{2,{\frak m}}(\mathcal A)=\sum_{j=1}^\infty\hat{{\frak m}}_j\big<\mathcal
A,\hat{F}_j\big>\hat{F}_{j}.$$ Therefore, ${\MATHCAL M}_{2,{\frak
m}}$ is in the Schatten $p$-class of $\oplus\mathcal{C}_2$;
this is because of, $$\sum_{j=1}^\infty|\hat{{\frak m}}_j|^p\leq
N^2\|{{\frak m}}\|_p^p.$$

Now, in order to prove that $\|{\MATHCAL M}_{2,{\frak
m}}\|_{\mathcal{C}_p}={N^2}\|{\frak m}\|_p$, first note that
$|{\MATHCAL M}_{2,{\frak m}}|={\MATHCAL M}_{2,{|{\frak m}|}}$. Hence, we observe that
\begin{align*}
\|{\MATHCAL M}_{2,{\frak
m}}\|_{\mathcal{C}_p}^p=\mathbf{tr}(|{\MATHCAL M}_{2,{\frak
m}}|^p) &=\sum_{i=1}^{\infty}\sum_{k=1}^{{N^2}}\left\langle
{\MATHCAL M}_{2,{|{\frak m}|^p}}
(F_{i,k}),F_{i,k}\right\rangle\\
&=\sum_{j=1}^{\infty}\left\langle {\MATHCAL M}_{2,{|\hat{{\frak
m}}|^p}}
(\hat{F}_j),\hat{F}_j\right\rangle\\
&=\sum_{j=1}^{\infty} \sum_{k=1}^{{N^2}} \left[|{\frak m}_j|^p
\mathcal{E}_k,\mathcal{E}_k\right]_{\mathbf{tr}}\\
&={N^2}\|{\frak m}\|_p^p.
\end{align*}
We have now completed the proof of the lemma.
\end{proof}


By applying Lemma \ref{sym-m} with $\mathcal H={\Bbb C}$, one
can obtain the following improvement of \cite[Lemma
5.4(3)]{balaz3}.

\begin{lemma}
If ${\frak m}\in\ell^p$, then the
operator
$$\mathcal M_{{\frak
m}}:\ell^2\rightarrow\ell^2;~\{c_i\}_{i=1}^\infty \mapsto\{{\frak
m}_ic_i\}_{i=1}^\infty,$$ is in the Schatten
$p$-class of $\ell^2$. In particular, $\|\mathcal M_{{\frak
m}}\|_{\mathcal{C}_p(\ell^2)}=\|{\frak m}\|_p$.
\end{lemma}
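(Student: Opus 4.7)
The plan is to deduce the statement directly by specializing Lemma \ref{sym-m}(3) to the one-dimensional case $\mathcal{H} = \mathbb{C}$, exactly as the sentence preceding the lemma suggests. First I would fix the identifications: when $\mathcal{H} = \mathbb{C}$ every bounded operator on $\mathcal{H}$ is just multiplication by a scalar, so $\mathcal{C}_p(\mathcal{H})$ is canonically identified with $\mathbb{C}$ (with $|\cdot|$ playing the role of $\|\cdot\|_{\mathcal{C}_p}$), and therefore $\oplus \mathcal{C}_p$ becomes $\ell^p$ with its usual norm. In particular $\oplus \mathcal{C}_2$ is identified with $\ell^2$.

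Next I would record what the orthonormal basis of Remark \ref{basis} becomes under this specialization. Taking $e_1 = 1$ as the orthonormal basis of $\mathbb{C}$, the unique element $e_1 \otimes e_1 \in \mathcal{C}_2$ corresponds to $1 \in \mathbb{C}$, so $N = 1$ and $N^2 = 1$, and the vectors $F_{i,1} = \{\delta_{i,j}\}_{j=1}^\infty$ are exactly the standard unit vectors of $\ell^2$. Under these identifications, the operator $\mathcal{M}_{2,\mathfrak{m}}$ of Lemma \ref{sym-m} acts on $\{c_i\} \in \ell^2$ by $\{c_i\} \mapsto \{\mathfrak{m}_i c_i\}$, so it coincides literally with the operator $\mathcal{M}_{\mathfrak{m}}$ of the present lemma.

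Invoking Lemma \ref{sym-m}(3) in this setting now yields, for every $\mathfrak{m} \in \ell^p$, that $\mathcal{M}_{\mathfrak{m}} \in \mathcal{C}_p(\ell^2)$ together with the norm identity
$$\|\mathcal{M}_{\mathfrak{m}}\|_{\mathcal{C}_p(\ell^2)} = N^2 \|\mathfrak{m}\|_p = \|\mathfrak{m}\|_p,$$
which is the claim. There is no real obstacle in this argument: the entire proof reduces to checking the consistency of the two identifications $B(\mathbb{C}) \cong \mathbb{C}$ and $\oplus \mathcal{C}_p \cong \ell^p$, after which plugging $N = 1$ into the formula of Lemma \ref{sym-m}(3) produces the desired equality.
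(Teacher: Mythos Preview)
Your proposal is correct and follows exactly the route the paper indicates: the lemma is stated immediately after the sentence ``By applying Lemma \ref{sym-m} with $\mathcal H={\Bbb C}$, one can obtain the following improvement\ldots'' and is given no separate proof, so specializing Lemma \ref{sym-m}(3) to $N=1$ via the identifications $\mathcal{C}_p(\mathbb{C})\cong\mathbb{C}$ and $\oplus\mathcal{C}_2\cong\ell^2$ is precisely the intended argument.
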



As was mentioned in section 2, in the case where $p=2$, the spaces
${\mathcal C}_2$ and $\oplus_2{\mathcal C}_2$ are Hilbert. Motivated by this fact the authors of \cite{von2,von1} provided a detailed
study of the duals of a von Neumann-Schatten 2-frame for Hilbert space
$\mathcal K$ with respect to $\mathcal H$. Let us recall from
\cite{von1} that, a sequence $\mathcal G\subseteq B(\mathcal
K,{\mathcal C}_2)$ is said to be a Hilbert--Schmidt frame or simply a $HS$-frame for $\mathcal K$
with respect to $\mathcal H$, if there exist two positive
numbers $A_{\mathcal G}$ and $B_{\mathcal G}$ such that
\begin{equation}
A_{\mathcal G}\Vert f \Vert^2_{\mathcal K} \leq
\sum_{i=1}^\infty\Vert {\mathcal
G}_i(f)\Vert_{\mathcal{C}_2}^{2}\leq B_{\mathcal G}\Vert f
\Vert^2_{\mathcal K}\nonumber
\end{equation}
Particularly, by using the Hilbert properties of the spaces, they
observed that
$$
U_{\mathcal G}(f)=\left\{\mathcal
G_i(f)\right\}_{i=1}^\infty\quad\quad\quad{\hbox{and}}\quad\quad\quad
T_{\mathcal G}(\{{\mathcal A}_i\}_{i=1}^\infty)=
\sum_{i=1}^{\infty}\mathcal G_i^*{\mathcal A}_i,$$ where
$f\in{\mathcal K}$ and $\{{\mathcal
A}_i\}_{i=1}^\infty\in\oplus{\mathcal C}_2$. Moreover, they showed that the mapping
$S_{\mathcal G}:=T_{\mathcal G}U_{\mathcal G}$ is a bounded,
invertible, self-adjoint and positive operator, and they called the $HS$-frame
$\widetilde{\mathcal G}:=\{{\mathcal G}_iS^{-1}_{\mathcal G}\}_{i=1}^\infty$
the canonical dual $HS$-frame of $\mathcal G$. It is
worthwhile to mention that the $HS$-frames is a more general version
of $g$-frames, an important generalization of ordinary frames.

The following remark is a very
useful tool in our study of $HS$-Bessel multiplier
when $\mathcal H$ is a finite dimensional space.

\begin{remark}\label{multi}
Suppose that ${\mathcal G}$ and $\mathcal F$ are $HS$-Bessel
sequences for $\mathcal K$ with respect to $\mathcal H$ and that
${\frak m}\in\ell^\infty$. For each
$f\in{\mathcal K}$, we observe that
\begin{align*}
\mathbf{M}_{{\frak m},\mathcal F,\mathcal G}(f)&=T_{\Mathcal
F} {\mathcal M}_{p,{\frak m}} U_{\Mathcal G}(f)\\&=\sum_{i=1}^\infty
{\frak m}_i{\mathcal F}_i^*{\mathcal G}_i(f)\\
&=\sum_{i=1}^\infty\sum_{n,m\in I}{\frak m}_i\big<f,{\mathcal
G}^*_i(e_n\otimes e_m)\big>{\mathcal F}_i^*(e_n\otimes e_m).
\end{align*}
In particular, if $\dim({\mathcal H})=N$, then
\begin{align*}
\mathbf{M}_{{\frak m},\mathcal F,\mathcal G}=\sum_{n,m=1}^N\sum_{i=1}^\infty{\frak m}_i\Big({\mathcal F}_i^*(e_n\otimes e_m)\otimes{\mathcal
G}^*_i(e_n\otimes e_m)\Big).
\end{align*}
Hence, if we set
$\Phi={\Big\{
\{{\mathcal F}_i^*(e_n\otimes e_m)\}_{n,m=1}^{{N^2}}\Big\}}_{i\in \mathbb{N}}$, $\Psi={\Big\{
\{{\mathcal G}_i^*(e_n\otimes e_m)\}_{n,m=1}^{{N^2}}\Big\}}_{i\in \mathbb{N}}$
and
$$\hat{{\frak m}}=\{\underbrace{{\frak
m}_1,\cdots,{\frak m}_1}_{N^2},\underbrace{{\frak
m}_2,\cdots,{\frak m}_2}_{N^2},\cdots\},$$
then $\Phi$ and $\Psi$ are ordinary Bessel sequences and the operator $\mathbf{M}_{{\frak m},\mathcal F,\mathcal G}$ is equal to the operator
$M_{\hat{{\frak m}},\Phi,\Psi}$ in the sense of Balazs \cite[Definition 5.1]{balaz3}
for ordinary Bessel sequences. The reader will remark that
in this case $\mathcal G$ and $\mathcal F$ are $HS$-Riesz basis if and only if
$\Phi$ and $\Psi$ are ordinary Riesz basis, see \cite[Theorem 3.3]{von2}.
\end{remark}


In the case where $\dim({\mathcal H})<\infty$, Remark \ref{multi} paves the way for obtaining some properties of $HS$-Bessel multipliers
from \cite{balaz3,bs}. In details, as an application of this remark and Lemma \ref{sym-m}, by a
method similar to that of \cite[Theorems 6.1 and 8.1]{balaz3} one can easily
obtain the following generalization of those theorems. The details
are omitted.

Let ${\mathcal F}^{(l)}=
\{{\mathcal F}_i^{(l)}\}_{i=1}^\infty$  be a sequence in
$B({\mathcal K},{\mathcal C}_2)$
indexed by $l\in{\Bbb N}$. We say that:
\begin{enumerate}
\item The sequence ${\mathcal F}^{(l)}$ converges uniformly to some sequence
$\mathcal F\subseteq B({\mathcal K},{\mathcal C}_2)$
with respect to operator norm, if for $i\longrightarrow\infty$ we have
$$\sup_l\{\|{\mathcal F}_i^{(l)}-{\mathcal F}_i\|_{\mathrm{op}}\}\longrightarrow 0.$$
\item The sequence ${\mathcal F}^{(l)}$ converges to some sequence
$\mathcal F\subseteq B({\mathcal K},{\mathcal C}_2)$ in $\ell^2$-sense if
$$\forall\varepsilon>0\quad\exists N\in{\Bbb N}\quad{\hbox{such~that~}}\quad
\forall l\geq N\quad\quad{\Big(}\sum_{i=1}^\infty\|{\mathcal F}_i^{(l)}-{\mathcal F}_i\|_{{\mathcal C}_p}^2{\Big)}^{1/2}<\varepsilon.$$
\end{enumerate}

\begin{proposition}\label{gelar2895}
Suppose that ${\mathcal G}$ and $\mathcal F$ are $HS$-Bessel
sequences for $\mathcal K$ with respect to $\mathcal H$ and that $\dim \mathcal H<\infty$. Then the following assertions hold.
\begin{enumerate}
\item If ${\frak m}\in c_0$, then $\mathbf{M}_{{\frak m},\mathcal
F,\mathcal G}$ is compact. \item If ${\frak m}\in\ell^p$, then
$\mathbf{M}_{{\frak m},\mathcal F,\mathcal G}$ is in the Schatten
$p$-class of $\mathcal K$ and $$\|\mathbf{M}_{{\frak m},\mathcal
F,\mathcal G}\|_{{\mathcal C}_p({\mathcal K})}\leq\sqrt{B_{\mathcal G}
B_{\mathcal F}}(\dim \mathcal H)^2\|{\frak m}\|_p.$$
\item Let ${\frak m}^{(l)}=\{{\frak m}_i^{(l)}\}_{i=1}^\infty$ be a sequence
indexed by $l$
converge to $\frak m$ in $\ell^{p}$, then $$\|\mathbf{M}_{{\frak m}^{(l)},{\Mathcal
F},{\Mathcal G}}-\mathbf{M}_{{\frak m},{\Mathcal
F},{\Mathcal G}}\|_{{\mathcal C}_p({\mathcal K})}\longrightarrow 0.$$
\item For the sequences ${\mathcal F}^{(l)}\subseteq B({\mathcal K},{\mathcal C}_2)$
indexed by $l\in{\Bbb N}$, we can say that
\newcounter{j59}
\begin{list}%
{\rm(\alph{j59})}{\usecounter{j59}}
\item If ${\frak m}\in\ell^1$ and the sequence ${\mathcal F}^{(l)}$
is a $HS$-Bessel sequence converging uniformly to $\mathcal F$ with respect to operator norm, then
$$\|\mathbf{M}_{{\frak m},{\Mathcal
F}^{(l)},{\Mathcal G}}-\mathbf{M}_{{\frak m},{\Mathcal
F},{\Mathcal G}}\|_{{\mathcal C}_1({\mathcal K})}\longrightarrow 0.$$
\item If ${\frak m}\in\ell^2$ and the sequence ${\mathcal F}^{(l)}$
converge to $\mathcal F$ in an $\ell^2$-sense, then
$$\|\mathbf{M}_{{\frak m},{\Mathcal
F}^{(l)},{\Mathcal G}}-\mathbf{M}_{{\frak m},{\Mathcal
F},{\Mathcal G}}\|_{{\mathcal C}_2({\mathcal K})}\longrightarrow 0.$$
\end{list}
\item For $HS$-Bessel sequences ${\mathcal G}^{(l)}$ converge to $\mathcal G$,
corresponding properties as in ${\mathrm(4)}$ apply.
\item \newcounter{jparsa59}
\begin{list}%
{\rm(\alph{jparsa59})}{\usecounter{jparsa59}}
\item Let ${\frak m}^{(l)}\longrightarrow{\frak m}$ in $\ell^1$, ${\mathcal F}^{(l)}$
and ${\mathcal G}^{(l)}$ be $HS$-Bessel sequences with bounds $B_{{\mathcal F}^{(l)}}$ and $B_{{\mathcal G}^{(l)}}$ such that $\sup_l B_{{\mathcal F}^{(l)}}<\infty$ and $\sup_l B_{{\mathcal G}^{(l)}}<\infty$. If the sequences ${\mathcal F}^{(l)}$
and ${\mathcal G}^{(l)}$ converge uniformly to $\mathcal F$ respectively $\mathcal G$ with respect to operator norm, then
$$\|\mathbf{M}_{{\frak m}^{(l)},{\Mathcal
F}^{(l)},{\Mathcal G}^{(l)}}-\mathbf{M}_{{\frak m},{\Mathcal
F},{\Mathcal G}}\|_{{\mathcal C}_1({\mathcal K})}\longrightarrow 0.$$
\item Let ${\frak m}^{(l)}\longrightarrow{\frak m}$ in $\ell^2$ and let
${\mathcal F}^{(l)}$
respectively ${\mathcal G}^{(l)}$ converge to $\mathcal F$ respectively $\mathcal G$ in an $\ell^2$-sense, then
$$\|\mathbf{M}_{{\frak m}^{(l)},{\Mathcal
F}^{(l)},{\Mathcal G}^{(l)}}-\mathbf{M}_{{\frak m},{\Mathcal
F},{\Mathcal G}}\|_{{\mathcal C}_2({\mathcal K})}\longrightarrow 0.$$
\end{list}
\end{enumerate}
\end{proposition}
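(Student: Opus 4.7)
The plan is to use Remark \ref{multi} to reduce every claim to the corresponding ordinary-Bessel-multiplier statement of Balazs \cite[Theorems~6.1 and 8.1]{balaz3} together with the Schatten-$p$ refinements of \cite{bs}. Set $N=\dim\mathcal H$. Remark \ref{multi} identifies $\mathbf{M}_{{\frak m},\mathcal F,\mathcal G}$ with the ordinary Bessel multiplier $M_{\hat{{\frak m}},\Phi,\Psi}$, where $\Phi=\{\mathcal F_i^*(e_n\otimes e_m)\}_{i,n,m}$ and $\Psi=\{\mathcal G_i^*(e_n\otimes e_m)\}_{i,n,m}$ are ordinary sequences in $\mathcal K$, and $\hat{{\frak m}}$ repeats each entry ${\frak m}_i$ exactly $N^2$ times. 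The bookkeeping I would carry out once and for all is: by Parseval in $\mathcal C_2$ for the basis $\{e_n\otimes e_m\}_{n,m=1}^N$,
$$\sum_{i,n,m}|\langle f,\mathcal F_i^*(e_n\otimes e_m)\rangle|^2=\sum_i\|\mathcal F_i(f)\|_{\mathcal C_2}^2,$$
so $\Phi$ has ordinary Bessel bound $B_{\mathcal F}$ (analogously for $\Psi$ and $B_{\mathcal G}$); and $\|\hat{{\frak m}}\|_r^r=N^2\|{\frak m}\|_r^r$ for every $1\leq r<\infty$, while $\hat{{\frak m}}\in c_0$ iff ${\frak m}\in c_0$.

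With this dictionary, (1) is immediate since ordinary Bessel multipliers with $c_0$ symbol are compact, and (2) follows from the Schatten bound $\|M_{\hat{{\frak m}},\Phi,\Psi}\|_{\mathcal C_p(\mathcal K)}\leq\sqrt{B_\Phi B_\Psi}\,\|\hat{{\frak m}}\|_p=\sqrt{B_{\mathcal F}B_{\mathcal G}}\,N^{2/p}\,\|{\frak m}\|_p$, which is at most $\sqrt{B_{\mathcal F}B_{\mathcal G}}\,N^2\,\|{\frak m}\|_p$ because $N\geq 1$ and $p\geq 1$. Statement (3) then comes from applying (2) to the difference multiplier $\mathbf{M}_{{\frak m}^{(l)}-{\frak m},\mathcal F,\mathcal G}$.

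For (4) and (5) I would transfer the two convergence modes to the ordinary-Bessel level. Uniform operator-norm convergence of $\mathcal F^{(l)}$ yields $\|(\mathcal F_i^{(l)})^*(e_n\otimes e_m)-\mathcal F_i^*(e_n\otimes e_m)\|\leq\|\mathcal F_i^{(l)}-\mathcal F_i\|_{\mathrm{op}}$, hence uniform-in-index convergence $\Phi^{(l)}\to\Phi$; and $\ell^2$-sense convergence of $\mathcal F^{(l)}$ transfers since $\sum_{n,m}\|(\mathcal F_i^{(l)})^*(e_n\otimes e_m)-\mathcal F_i^*(e_n\otimes e_m)\|^2=\|\mathcal F_i^{(l)}-\mathcal F_i\|_{\mathcal C_2}^2$. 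Combined with ${\frak m}\in\ell^1$ or ${\frak m}\in\ell^2$, Balazs's trace-class and Hilbert--Schmidt perturbation theorems close each subcase; (5) is the symmetric argument. Finally, (6) is handled by the three-term telescoping
$$\mathbf{M}_{{\frak m}^{(l)},\mathcal F^{(l)},\mathcal G^{(l)}}-\mathbf{M}_{{\frak m},\mathcal F,\mathcal G}=\mathbf{M}_{{\frak m}^{(l)}-{\frak m},\mathcal F^{(l)},\mathcal G^{(l)}}+\mathbf{M}_{{\frak m},\mathcal F^{(l)}-\mathcal F,\mathcal G^{(l)}}+\mathbf{M}_{{\frak m},\mathcal F,\mathcal G^{(l)}-\mathcal G},$$
bounding the first summand by (2) (using $\sup_l B_{\mathcal F^{(l)}},\sup_l B_{\mathcal G^{(l)}}<\infty$) and the remaining two by (4) and (5).

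The only technical nuisance—not a genuine obstacle—is tracking that the intermediate difference sequences inherit Bessel bounds that vanish in the appropriate sense; this is exactly what the uniform-operator-norm and $\ell^2$-sense hypotheses are designed to deliver, and the uniform boundedness of $B_{\mathcal F^{(l)}},B_{\mathcal G^{(l)}}$ in (6) is what prevents the first telescoped term from blowing up.
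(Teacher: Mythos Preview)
Your proposal is correct and follows precisely the route the paper itself sketches: reduce via Remark~\ref{multi} to ordinary Bessel multipliers and then invoke the corresponding results of Balazs \cite[Theorems~6.1 and~8.1]{balaz3} (the paper explicitly omits details). As a bonus, your direct transfer through $\hat{{\frak m}}$ yields the sharper constant $N^{2/p}$ in place of the stated $N^2$ in~(2); the paper's $N^2$ arises instead from combining Lemma~\ref{sym-m}(3) with the factorization $\mathbf{M}_{{\frak m},\mathcal F,\mathcal G}=T_{\mathcal F}\,\mathcal M_{2,{\frak m}}\,U_{\mathcal G}$ and the Schatten ideal property.
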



By another application
of Remark \ref{multi} with the aid of \cite[Theorem 5.1]{bs} we have the following generalization of that theorem. In this proposition and in the sequel the sequence $\frak m$ is called
semi-normalized if $0<\inf_i|{\frak m}_i|\leq\sup_i|{\frak m}_i|<\infty$;
In this case, the notation $1/{\frak m}$ is used to denote the sequence $\{1/{\frak m}_i\}_{i=1}^\infty$.

\begin{proposition}\label{parsa2895}
Suppose that ${\mathcal F}$ is a $HS$--Riesz basis for $\mathcal K$ with respect to $\mathcal H$ and that $\dim \mathcal H<\infty$. Then the following
assertions hold.
\begin{enumerate}
\item If $\mathcal G$ is a $HS$-Riesz basis, then
$\mathbf{M}_{{\frak m},\mathcal F,\mathcal G}$ is invertible on
$\mathcal K$ if and only if $\frak m$ is semi--normalized. \item
If $\frak m$ is semi--normalized, then $\mathbf{M}_{{\frak
m},\mathcal F,\mathcal G}$ is invertible on $\mathcal K$ if and
only if $\mathcal G$ is a $HS$--Riesz basis.
\end{enumerate}
\end{proposition}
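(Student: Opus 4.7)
The plan is to reduce the statement to the ordinary (Hilbert space) Bessel multiplier setting and then invoke \cite[Theorem 5.1]{bs}, exactly in the spirit of the remark preceding this proposition. Because $\dim\mathcal H=N<\infty$, Remark \ref{multi} provides the identification
$$\mathbf{M}_{{\frak m},{\Mathcal F},{\Mathcal G}}=M_{\hat{{\frak m}},\Phi,\Psi},$$
where $\Phi$ and $\Psi$ are ordinary Bessel sequences in $\mathcal K$ built from the operators $\mathcal F_i^*, \mathcal G_i^*$ applied to the basis $\{e_n\otimes e_m\}$, and
$$\hat{{\frak m}}=\{\underbrace{{\frak m}_1,\dots,{\frak m}_1}_{N^2},\underbrace{{\frak m}_2,\dots,{\frak m}_2}_{N^2},\dots\}.$$
Moreover, by the observation at the end of Remark \ref{multi} (via \cite[Theorem 3.3]{von2}), $\mathcal G$ is an $HS$-Riesz basis if and only if $\Psi$ is an ordinary Riesz basis, and similarly for $\mathcal F$ and $\Phi$.

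The second ingredient I would record is the elementary but essential fact that $\frak m$ is semi-normalized if and only if $\hat{{\frak m}}$ is semi-normalized. This is immediate because each entry of $\hat{{\frak m}}$ is a copy of some ${\frak m}_i$ and conversely every ${\frak m}_i$ appears in $\hat{{\frak m}}$, so $\inf_j|\hat{{\frak m}}_j|=\inf_i|{\frak m}_i|$ and $\sup_j|\hat{{\frak m}}_j|=\sup_i|{\frak m}_i|$.

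With these two reductions in hand, both parts follow from \cite[Theorem 5.1]{bs} applied to $M_{\hat{{\frak m}},\Phi,\Psi}$. For (1), the hypothesis that $\mathcal G$ is an $HS$-Riesz basis translates to $\Phi$ and $\Psi$ both being Riesz bases, so \cite[Theorem 5.1]{bs} gives invertibility of $M_{\hat{{\frak m}},\Phi,\Psi}$ on $\mathcal K$ precisely when $\hat{{\frak m}}$ is semi-normalized, which by the previous paragraph is equivalent to $\frak m$ being semi-normalized. For (2), $\Phi$ is a Riesz basis and $\hat{{\frak m}}$ is semi-normalized, so \cite[Theorem 5.1]{bs} yields invertibility of $M_{\hat{{\frak m}},\Phi,\Psi}$ if and only if $\Psi$ is a Riesz basis, which is equivalent to $\mathcal G$ being an $HS$-Riesz basis.

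The only genuinely non-mechanical step is verifying that the finite-dimensional identification in Remark \ref{multi} is precisely the right match with \cite[Theorem 5.1]{bs}, i.e.\ that the ``one is a Riesz basis'' hypothesis there corresponds, after the $N^2$-fold repetition, to the $HS$-Riesz basis hypothesis here; since this is already recorded in Remark \ref{multi}, I expect no real obstacle, and the argument should be essentially a citation-level reduction.
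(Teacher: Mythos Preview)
Your proposal is correct and follows exactly the approach the paper intends: the paper does not give a detailed proof but simply states that the proposition is ``another application of Remark~\ref{multi} with the aid of \cite[Theorem 5.1]{bs},'' which is precisely the reduction you spell out. Your explicit observation that $\frak m$ is semi-normalized if and only if $\hat{\frak m}$ is semi-normalized is the only small detail the paper leaves implicit, and it is indeed trivial.
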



There does not seem to
be an easy way to extend Propositions \ref{gelar2895} and \ref{parsa2895} to infinite dimensional case.
However, we have the
following result.

\begin{proposition}\label{parsa28952}
Let $\mathcal X_1$ be a reflexive Banach space and ${\mathcal G}\subseteq B(\mathcal
X_1^*,\mathcal{C}_p)$ be a von Neumann-Schatten $q$-Riesz basis
for $\mathcal X_1$ with bounds $A_{\mathcal G}$ and $B_{\mathcal G}$. Let also ${\Mathcal F}\subseteq B(\mathcal
X_2,\mathcal{C}_q)$ be a von Neumann-Schatten $p$-Riesz basis for
$\mathcal X_2^{*}$ with bounds $A_{\mathcal F}$ and $B_{\mathcal F}$. Then for each
${\frak m}\in \ell^\infty$ we
have
\[A_{\mathcal F} A_{\mathcal G}\|{\frak m}\|_\infty\leq
\|\mathbf{M}_{{\frak m},{\Mathcal F},{\Mathcal G}}
\|_{\mathrm{op}}\leq B_{\mathcal F} B_{\mathcal G} \|{\frak m}\|_\infty.\] In particular, the operator $\mathbf{M}_{{\frak m},{\Mathcal F},{\Mathcal G}}$ is invertible and the
mapping ${\frak m}\mapsto\mathbf{M}_{{\frak m},{\Mathcal
F},{\Mathcal G}}$ from $\ell^\infty$ into
$B(\mathcal X_1^*,\mathcal X_2^*)$ is injective.
\end{proposition}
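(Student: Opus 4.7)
The plan is to exploit the factorization $\mathbf{M}_{\frak m,\Mathcal F,\Mathcal G}=T_{\Mathcal F}\,\mathcal M_{p,\frak m}\,U_{\Mathcal G}$ recorded just before Lemma \ref{sym-m}, and to combine it with the machinery already on hand: Lemma \ref{upper-b} for the operator-norm upper bound, Lemma \ref{riesz-frame} for turning the Riesz-basis hypothesis into a $p$-frame bound, and Lemma \ref{sym-m}(1) for the precise norm of the middle multiplier factor. The Riesz-basis assumptions on both $\Mathcal G$ and $\Mathcal F$ promote $T_{\Mathcal F}$ and $U_{\Mathcal G}$ from bounded maps to topological isomorphisms, which is exactly what is needed to upgrade the one-sided estimate of Lemma \ref{upper-b} to a two-sided estimate.

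For the upper bound I would invoke Lemma \ref{upper-b} directly. This requires $\Mathcal G$ to be a $p$-Bessel sequence for $\mathcal X_1^*$ with bound $B_{\Mathcal G}$, which follows from Lemma \ref{riesz-frame} applied to the reflexive space $\mathcal X_1^*$ (reflexive because $\mathcal X_1$ is), and $\Mathcal F$ to be a $q$-Bessel sequence for $\mathcal X_2$ with bound $B_{\Mathcal F}$, which can be read off from the upper Riesz bound on $T_{\Mathcal F}$ by duality (the restriction of $T_{\Mathcal F}^{*}:\mathcal X_2^{**}\to\oplus\Mathcal C_q$ to $\mathcal X_2$ coincides with $U_{\Mathcal F}$). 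For the lower bound I would use that the Riesz-basis hypothesis makes $U_{\Mathcal G}:\mathcal X_1^*\to\oplus\Mathcal C_p$ and $T_{\Mathcal F}:\oplus\Mathcal C_p\to\mathcal X_2^*$ bijections satisfying $\|U_{\Mathcal G}^{-1}(\mathcal A)\|_{\mathcal X_1^*}\leq A_{\Mathcal G}^{-1}\|\mathcal A\|_p$ and $\|T_{\Mathcal F}(\mathcal A)\|_{\mathcal X_2^*}\geq A_{\Mathcal F}\|\mathcal A\|_p$. Parameterising over $f\in\mathcal X_1^*$ via the change of variable $\mathcal A=U_{\Mathcal G}(f)$ and invoking Lemma \ref{sym-m}(1) then gives
\[
\|\mathbf{M}_{\frak m,\Mathcal F,\Mathcal G}\|_{\mathrm{op}}=\sup_{\mathcal A\neq 0}\frac{\|T_{\Mathcal F}\,\mathcal M_{p,\frak m}(\mathcal A)\|_{\mathcal X_2^*}}{\|U_{\Mathcal G}^{-1}(\mathcal A)\|_{\mathcal X_1^*}}\;\geq\;A_{\Mathcal F}A_{\Mathcal G}\sup_{\mathcal A\neq 0}\frac{\|\mathcal M_{p,\frak m}(\mathcal A)\|_p}{\|\mathcal A\|_p}=A_{\Mathcal F}A_{\Mathcal G}\|\frak m\|_\infty.
\]

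The two assertions in the ``in particular'' clause then fall out immediately. Injectivity of $\frak m\mapsto\mathbf{M}_{\frak m,\Mathcal F,\Mathcal G}$ follows from linearity in $\frak m$ and the lower bound, since for any two symbols producing the same multiplier the lower bound applied to $\frak m-\frak m'$ forces $\|\frak m-\frak m'\|_\infty=0$. Invertibility of the multiplier itself is a consequence of the factorization together with Lemma \ref{sym-m}(1) and its obvious analogue for $1/\frak m$: the outer factors $T_{\Mathcal F}$ and $U_{\Mathcal G}$ are already bijections, and $\mathcal M_{p,\frak m}$ possesses a two-sided inverse $\mathcal M_{p,1/\frak m}$ as soon as $\frak m$ is semi-normalised, in parallel with the hypothesis in Proposition \ref{parsa2895}. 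I do not anticipate any substantive obstacle; the only place demanding care is the bookkeeping of which Lebesgue exponent is attached to which space---the asymmetry $\Mathcal G\in B(\mathcal X_1^*,\Mathcal C_p)$ versus $\Mathcal F\in B(\mathcal X_2,\Mathcal C_q)$, together with the ``$q$-Riesz basis for $\mathcal X_1$'' versus ``$p$-Riesz basis for $\mathcal X_2^*$'' nomenclature---so that the three factors compose correctly and the applications of Lemmas \ref{riesz-frame} and \ref{sym-m} produce the claimed constants without any off-by-one dualisation error.
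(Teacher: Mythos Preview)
Your argument is correct and essentially the same as the paper's, with a minor repackaging of the lower-bound step. The paper picks the explicit test elements $\mathcal A^{(j)}=\{\delta_{i,j}\,x\otimes x\}_i$, pulls them back through the surjection $U_{\mathcal G}$ to obtain $f_j\in\mathcal X_1^*$, bounds $\|f_j\|\le A_{\mathcal G}^{-1}$ via Lemma~\ref{riesz-frame}, and bounds $\|T_{\mathcal F}\mathcal M_{p,\frak m}(\mathcal A^{(j)})\|\ge A_{\mathcal F}|\frak m_j|$ via Eq.~(\ref{11695}); taking the sup over $j$ gives the claim. You do exactly the same thing at the operator level: the same two inequalities become $\|U_{\mathcal G}^{-1}\|\le A_{\mathcal G}^{-1}$ and $\|T_{\mathcal F}(\cdot)\|\ge A_{\mathcal F}\|\cdot\|_p$, and the sup over the test elements is absorbed into your citation of Lemma~\ref{sym-m}(1) for $\|\mathcal M_{p,\frak m}\|_{\mathrm{op}}=\|\frak m\|_\infty$. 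The ingredients and constants are identical; only the bookkeeping differs.

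One remark on the ``in particular'' clause: you are right to flag that invertibility of $\mathbf M_{\frak m,\mathcal F,\mathcal G}$ needs $\frak m$ semi-normalized (so that $\mathcal M_{p,1/\frak m}$ makes sense); the two-sided norm estimate alone does not give it, and the paper's own proof does not supply a separate argument for this part of the statement. Your factorization argument is the correct way to fill that in.
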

\begin{proof} By Lemma \ref{upper-b}, it will be enough to prove that
we have the lower bound. To this end, suppose that $x$ is an
arbitrary element of $\mathcal H$ with $\|x\|=1$ and $j\in{\Bbb N}$. Then,
${\mathcal A}^{(j)}=\{\delta_{i,j}\cdot x\otimes x\}_{i=1}^\infty\in\oplus\mathcal{C}_p$.
Thus, by the surjectivity of the operator $U_{\mathcal G}$, there
exists an element $f_j\in\mathcal X^*_1$
such that $U_{\mathcal G}(f_j)={\mathcal A}^{(j)}$.
In particular,
$\sum_{i=1}^\infty
 \Vert {\mathcal G}_i(f_j)\Vert_{\mathcal{C}_p}^{p}=1$.
We now invoke Lemma \ref{riesz-frame} to conclude that
$\frac{1}{B_{\mathcal G}}\leq\|f_j\|_{\mathcal X_1^*}\leq \frac{1}{A_{\mathcal G}}$.
Also, since ${\mathcal F}\subseteq B(\mathcal
X_2,\mathcal{C}_q)$ is a Von Neumann-Schatten $p$-Riesz basis for
$\mathcal X_2^*$, Eq. (\ref{11695}) implies that
\begin{equation*}
\|T_{\mathcal F}{\mathcal M}_{p,{\frak m}}({\mathcal A}^{(j)})\| \geq A_{\mathcal F}
\left\|{\mathcal M}_{p,{\frak m}}({\mathcal A}^{(j)})\right\|_p = A_{\mathcal F}|{\frak m}_j|.
\end{equation*}
Now, we get
$$
\|\mathbf{M}_{{\frak m},{\Mathcal F},{\Mathcal G}}\|_{\mathrm{op}}
\geq \sup_{j} {\|T_{\Mathcal
F} {\mathcal M}_{p,{\frak m}} U_{\Mathcal G}(f_j)\| \over \|f_j\|}\geq A_{\mathcal F} A_{\mathcal G}\|{\frak m}\|_\infty,$$
and this completes the proof.
\end{proof}

\section{Invertibility of $HS$--frame multipliers}

In this section, we turn our attention to the study of invertible $HS$-frame multipliers.
First let us to note that, if $\frak m$ is semi-normalized and
$\mathbf{M}_{{\frak m},\mathcal
F,\mathcal G}$ is invertible, then the $HS$-Bessel sequences $\mathcal F$ and $\mathcal G$
are automatically $HS$-frames. We also recall that a $HS$-frame ${\mathcal G}^d=\{{\mathcal G}_i^d\}_{i=1}^\infty$ is called a dual of $\mathcal G$ if $T_{\mathcal G}U_{{\mathcal G}^d}=Id_{\mathcal K}$.

The main result of this section is the following theorem.

\begin{theorem}\label{main}
Suppose that ${\mathcal G}=\{{\mathcal G}_j\}_{j=1}^\infty$ and $\mathcal F=\{{\mathcal F}_j\}_{j=1}^\infty$ are $HS$-frames
for $\mathcal K$ with respect to $\mathcal H$, and
that the symbol $\frak m$ is semi--normalized. If $\mathbf{M}_{{\frak m},\mathcal
F,\mathcal G}$ is an
invertible multiplier, then there exists a unique bounded operator
$\Gamma:{\mathcal K}\rightarrow \oplus_2{\mathcal C}_2$ such that
\begin{equation}\label{9895}
\mathbf{M}^{-1}_{{\frak m},\mathcal
F,\mathcal G}=\mathbf{M}_{1/{\frak m},\widetilde{\mathcal G},{{\mathcal F}^d}}+
\Gamma^*U_{{\mathcal F}^d},
\end{equation}
for all dual $HS$-frames ${\mathcal F}^d=\{{\mathcal F}_j^d\}_{j=1}^\infty$ of $\mathcal F$.
\end{theorem}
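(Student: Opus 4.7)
The plan is to exploit the factorization $\mathbf M_{1/\mathfrak m,\widetilde{\mathcal G},\mathcal F^{d}}=T_{\widetilde{\mathcal G}}\mathcal M_{2,1/\mathfrak m}U_{\mathcal F^{d}}$ and recast \eqref{9895} as the search for a single bounded operator $\Gamma^{*}\colon\oplus_{2}\mathcal C_{2}\to\mathcal K$ satisfying
$$\Gamma^{*}U_{\mathcal F^{d}}=\mathbf M^{-1}_{\mathfrak m,\mathcal F,\mathcal G}-T_{\widetilde{\mathcal G}}\mathcal M_{2,1/\mathfrak m}U_{\mathcal F^{d}}\quad\text{for every dual }\mathcal F^{d}.$$
I will parametrize the duals through their analysis operators: writing $U_{\mathcal F^{d}}=U_{\widetilde{\mathcal F}}+V$, the condition $T_{\mathcal F}U_{\mathcal F^{d}}=Id_{\mathcal K}$ is equivalent to $T_{\mathcal F}V=0$, and conversely every bounded $V\colon\mathcal K\to\oplus_{2}\mathcal C_{2}$ with this property arises from a dual HS-frame, since any bounded map into $\oplus_{2}\mathcal C_{2}$ is automatically an analysis operator of an HS-Bessel sequence and the resulting sequence is then dual to $\mathcal F$.

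The cornerstone is the orthogonal decomposition
$$\oplus_{2}\mathcal C_{2}=\mathrm{range}(U_{\widetilde{\mathcal F}})\oplus\ker T_{\mathcal F},$$
which follows because $\widetilde{\mathcal F}$ is itself an HS-frame (so $U_{\widetilde{\mathcal F}}$ is bounded below with closed range) and $(\mathrm{range}\,U_{\widetilde{\mathcal F}})^{\perp}=\ker U^{*}_{\widetilde{\mathcal F}}=\ker T_{\widetilde{\mathcal F}}=\ker T_{\mathcal F}$, using $T_{\widetilde{\mathcal F}}=S^{-1}_{\mathcal F}T_{\mathcal F}$. Substituting $V=0$ into the displayed equation pins down $\Gamma^{*}$ on $\mathrm{range}(U_{\widetilde{\mathcal F}})$, while arbitrary $V$ valued in $\ker T_{\mathcal F}$ forces $\Gamma^{*}|_{\ker T_{\mathcal F}}=-T_{\widetilde{\mathcal G}}\mathcal M_{2,1/\mathfrak m}|_{\ker T_{\mathcal F}}$. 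I therefore \emph{define} $\Gamma^{*}$ by these two prescriptions and extend by orthogonality; the first is well-defined and bounded because $U_{\widetilde{\mathcal F}}$ is boundedly invertible onto its range, and the second is the restriction of a bounded operator, so the resulting $\Gamma^{*}$ is bounded on the whole space and $\Gamma:=(\Gamma^{*})^{*}$ is the required operator. Verification of \eqref{9895} is then immediate: for any dual, decompose $U_{\mathcal F^{d}}f=U_{\widetilde{\mathcal F}}f+Vf$, apply $\Gamma^{*}$ piecewise along the orthogonal splitting, and recombine to recover $\mathbf M^{-1}_{\mathfrak m,\mathcal F,\mathcal G}f-T_{\widetilde{\mathcal G}}\mathcal M_{2,1/\mathfrak m}U_{\mathcal F^{d}}f$.

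For uniqueness, if $\Gamma_{1}$ and $\Gamma_{2}$ both satisfy \eqref{9895}, then $(\Gamma_{1}-\Gamma_{2})^{*}U_{\mathcal F^{d}}=0$ for every dual. Specializing to $\mathcal F^{d}=\widetilde{\mathcal F}$ yields vanishing on $\mathrm{range}(U_{\widetilde{\mathcal F}})$; choosing duals produced by rank-one perturbations $V(g)=\|f_{0}\|^{-2}\langle g,f_{0}\rangle\zeta$ with $\zeta$ ranging over $\ker T_{\mathcal F}$ forces vanishing on $\ker T_{\mathcal F}$, and the orthogonal decomposition closes the argument. The main obstacle I anticipate is the clean justification of the dual parametrization---that every bounded $V$ with $T_{\mathcal F}V=0$ corresponds to a genuine dual HS-frame---but this reduces to the elementary observation that any bounded map $\mathcal K\to\oplus_{2}\mathcal C_{2}$ is automatically the analysis operator of an HS-Bessel sequence, after which the construction is essentially dictated by the orthogonal decomposition.
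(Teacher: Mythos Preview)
Your argument is correct, and the operator you construct coincides with the paper's. The organization differs, however. For existence the paper writes down a single closed formula
\[
\Gamma = U_{\mathcal F}\bigl(\mathbf M^{-1}_{\mathfrak m,\mathcal F,\mathcal G}\bigr)^{*}-\mathcal M_{2,1/\overline{\mathfrak m}}\,U_{\mathcal G}\,S^{-1}_{\mathcal G},
\]
whose adjoint is $\Gamma^{*}=\mathbf M^{-1}_{\mathfrak m,\mathcal F,\mathcal G}\,T_{\mathcal F}-T_{\widetilde{\mathcal G}}\,\mathcal M_{2,1/\mathfrak m}$; composing on the right with $U_{\mathcal F^{d}}$ and using $T_{\mathcal F}U_{\mathcal F^{d}}=Id_{\mathcal K}$ gives \eqref{9895} in one line, with boundedness immediate. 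Your piecewise construction via $\oplus_{2}\mathcal C_{2}=\mathrm{range}(U_{\widetilde{\mathcal F}})\oplus\ker T_{\mathcal F}$ reproduces exactly this operator (just evaluate the paper's $\Gamma^{*}$ on each summand), but the global formula avoids the separate well-definedness and boundedness checks. For uniqueness the paper does not invoke the orthogonal splitting: after recording $(\Gamma_{1}-\Gamma_{2})^{*}U_{\widetilde{\mathcal F}}=0$, it builds, for each basis vector $F_{i,k}$ of $\oplus_{2}\mathcal C_{2}$, an explicit dual $\mathcal F^{d}_{i,k}$ whose analysis operator hits $F_{i,k}$ at a single test vector, and concludes $(\Gamma_{1}-\Gamma_{2})^{*}F_{i,k}=0$ directly. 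Your rank-one perturbations $V(g)=\langle g,f_{0}\rangle\|f_{0}\|^{-2}\zeta$ with $\zeta\in\ker T_{\mathcal F}$ accomplish the same thing through the decomposition. The trade-off: the paper's route is shorter and yields a formula for $\Gamma$ that is reused later (e.g.\ in Proposition~\ref{gelarjoon} and Remark~\ref{equivalent}), while your route makes transparent \emph{why} $\Gamma$ is unique---its values are forced on each orthogonal summand by the family of duals.
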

\begin{proof} Define $\Gamma:{\mathcal K}\rightarrow \oplus_2{\mathcal C}_2$ by
\begin{equation}\label{111}
\Gamma(f):=U_{\mathcal F}(\mathbf{M}^{-1}_{{\frak m},\mathcal
F,\mathcal G})^*(f)-{\mathcal M}_{2,1/\overline{\frak m}}U_{\mathcal G} S^{-1}_{\mathcal G}(f)\quad\quad\quad(f\in
{\mathcal K}).
\end{equation}
Then the operator $\Gamma$ is
bounded and
$$\mathbf{M}^{-1}_{{\frak m},\mathcal
F,\mathcal G}T_{\mathcal F}=\Gamma^*+S^{-1}_{\mathcal G} T_{\mathcal G}{\mathcal
M}_{2,1/{\frak m}}.$$
Using any dual $HS$-frame ${\mathcal F}^d$ of ${\mathcal F}$ we get
\begin{equation}\label{222}
\mathbf{M}^{-1}_{{\frak m},\mathcal
F,\mathcal G}=S^{-1}_{\mathcal G} T_{\mathcal G}{\mathcal
M}_{2,1/{\frak m}}U_{{\mathcal F}^d}+ \Gamma^*U_{{\mathcal F}^d}.
\end{equation}
It follows that
$\mathbf{M}^{-1}_{{\frak m},\mathcal
F,\mathcal G}={\mathbf{M}}_{1/{\frak m},\widetilde{\mathcal G},{{\mathcal F}^d}}+
\Gamma^*U_{{\mathcal F}^d}$ for all dual $HS$-frames ${\mathcal F}^d$ of ${\mathcal F}$.
Having reached this state it remains to show that
$\Gamma$ is uniquely determined. To this end, suppose on the
contrary that Eq. (\ref{222}) are hold for two operators
$\Gamma_1$ and $\Gamma_2$. It follows that
\begin{equation}\label{gelar}
(\Gamma_1-\Gamma_2)^*U_{{\mathcal F}^d}=0,
\end{equation}
for all dual $HS$-frames ${\mathcal F}^d$ of ${\mathcal F}$. In particular, for each $f\in\mathcal K$, we have
\begin{equation}\label{parsa}
(\Gamma_1-\Gamma_2)^*(\{{\mathcal F}_jS_{\mathcal F}^{-1}(f)\}_{j=1}^\infty)=0.
\end{equation}
Now, let $i$ and $k$ be arbitrary elements in ${\Bbb N}$ and $F_{i,k}=\{\delta_{i,j}\mathcal{E}_k\}_{j=1}^\infty$, which introduced in Remark \ref{basis}. If for each $j\in{\Bbb N}$, we define
$${\mathcal F}'_{i,j,k}:\mathcal K\rightarrow{\mathcal C}_2;\quad f\mapsto\big<f,e'_1\big>\delta_{i,j}\mathcal{E}_k,$$
where $\{e'_l\}_{l\in L}$ is an orthonormal basis for $\mathcal K$.
Then it is not hard to check that the sequence ${\mathcal F}'_{i,k}=\{{\mathcal F}'_{i,j,k}\}_{j=1}^\infty$ is a $HS$-Bessel sequence for $\mathcal K$ with respect to $\mathcal H$ and the $HS$-Bessel sequence
$${\mathcal F}_{i,k}^d=\{{\mathcal F}_jS^{-1}_{\mathcal F}+{\mathcal F}'_{i,j,k}-{\mathcal F}_j
S^{-1}_{\mathcal F}T_{\mathcal F}U_{{\mathcal F}_{i,k}'}\}_{j=1}^\infty$$
is a dual $HS$-frame of ${\mathcal F}$. Therefore, Eq. (\ref{gelar}) and (\ref{parsa}),
implies that
\begin{align*}
0&=(\Gamma_1-\Gamma_2)^*{U_{{\mathcal F}_{i,k}^d}}(f)\\
&=(\Gamma_1-\Gamma_2)^*\left(\{{\mathcal F}_jS^{-1}_{\mathcal F}(f)+{\mathcal F}'_{i,j,k}(f)-{\mathcal F}_j
S^{-1}_{\mathcal F}T_{\mathcal F}U_{{\mathcal F}_{i,k}'}(f)\}_{j=1}^\infty\right)\\
&=(\Gamma_1-\Gamma_2)^*\left(\{{\mathcal F}'_{i,j,k}(f)\}_{j=1}^\infty\right)
\end{align*}
for all $f\in\mathcal K$. Hence, we have
$$0=(\Gamma_1-\Gamma_2)^*\left(\{{\mathcal F}'_{i,j,k}(e'_1)\}_{j=1}^\infty\right)=
(\Gamma_1-\Gamma_2)^*(F_{i,k}).$$
This says that $(\Gamma_1-\Gamma_2)^*(F_{i,k})=0$ for all $i, k\in {\Bbb N}$. We now invoke Remark \ref{basis} to conclude that $\Gamma_1=\Gamma_2$ and this completes the proof of the theorem.
\end{proof}

For operator $\Gamma$ in Proposition \ref{main} it is not
hard to check that $T_{\mathcal G}{\mathcal M}_{2,\overline{{\frak m}}}\Gamma=0$. It follows that, if $\mathcal G$ is a $HS$-Riesz basis and $\frak m$ is semi--normalized, then
$$
\mathbf{M}^{-1}_{{\frak m},\mathcal
F,\mathcal G}=\mathbf{M}_{1/{\frak m},\widetilde{\mathcal G},{{\mathcal F}^d}},
$$
for all dual $HS$-frames ${\mathcal F}^d=\{{\mathcal F}_j^d\}_{j=1}^\infty$ of $\mathcal F$.

This observation together with Proposition \ref{parsa28952} give the following result.

\begin{corollary}
Let ${\mathcal G}$ and $\mathcal F$ be $HS$--Riesz basis for $\mathcal K$ with respect
to $\mathcal H$ and let $\frak m$ be semi--normalized. Then
$\mathbf{M}_{{\frak m},\mathcal F,\mathcal G}$ is invertible on
$\mathcal K$ and $$
\mathbf{M}^{-1}_{{\frak m},\mathcal
F,\mathcal G}=\mathbf{M}_{1/{\frak m},\widetilde{\mathcal G},\widetilde{\mathcal F}}.$$
\end{corollary}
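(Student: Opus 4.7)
The plan is to deduce the corollary directly by combining Proposition \ref{parsa28952} with the explicit inverse formula displayed in the paragraph immediately preceding the corollary statement. First I would verify invertibility of $\mathbf{M}_{{\frak m},\mathcal F,\mathcal G}$ on $\mathcal K$: since $\mathcal K$ is a Hilbert space (in particular reflexive) and the conjugate exponent of $p=2$ is $q=2$, the hypotheses of Proposition \ref{parsa28952} are met with $\mathcal X_1=\mathcal X_2=\mathcal K$. Semi-normalization of $\frak m$ gives both $\|{\frak m}\|_\infty<\infty$ and $\inf_i|{\frak m}_i|>0$, so the two-sided estimate in that proposition yields a nonzero lower bound, and the conclusion that the multiplier is invertible on $\mathcal K$ follows at once.

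Second, I would invoke the formula appearing just before the corollary, which asserts that whenever $\mathcal G$ is an $HS$-Riesz basis and $\frak m$ is semi-normalized,
$$\mathbf{M}^{-1}_{{\frak m},\mathcal F,\mathcal G}=\mathbf{M}_{1/{\frak m},\widetilde{\mathcal G},\mathcal F^d}$$
for every dual $HS$-frame $\mathcal F^d$ of $\mathcal F$. Since $\mathcal F$ is in particular an $HS$-frame, its canonical dual $\widetilde{\mathcal F}=\{\mathcal F_i S^{-1}_{\mathcal F}\}_{i=1}^\infty$ is a legitimate choice for $\mathcal F^d$, as recorded in the overview of $HS$-frames in Section 2. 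Substituting $\mathcal F^d=\widetilde{\mathcal F}$ produces the claimed identity $\mathbf{M}^{-1}_{{\frak m},\mathcal F,\mathcal G}=\mathbf{M}_{1/{\frak m},\widetilde{\mathcal G},\widetilde{\mathcal F}}$.

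There is essentially no substantial obstacle; the corollary is a direct specialization of the two preceding results. The only point worth flagging is the dual role played by the semi-normalization hypothesis: it simultaneously ensures ${\frak m}\in\ell^\infty$ (so that Proposition \ref{parsa28952} applies and Theorem \ref{main} can be invoked) and $1/{\frak m}\in\ell^\infty$ (so that $\mathbf{M}_{1/{\frak m},\widetilde{\mathcal G},\widetilde{\mathcal F}}$ is itself a well-defined $HS$-Bessel multiplier in the sense of Definition \ref{p-q-multi}, making the right-hand side of the inverse formula meaningful).
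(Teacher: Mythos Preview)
Your approach matches the paper's exactly: invoke Proposition \ref{parsa28952} for invertibility, then specialize the displayed formula preceding the corollary by taking $\mathcal F^d=\widetilde{\mathcal F}$. One minor slip worth flagging: the two-sided estimate in Proposition \ref{parsa28952} bounds only the \emph{operator norm} $\|\mathbf{M}_{{\frak m},\mathcal F,\mathcal G}\|_{\mathrm{op}}$, and a nonzero lower bound on that norm does not by itself give invertibility of the operator. You should either cite the invertibility conclusion of that proposition directly, or note that $\mathbf{M}_{{\frak m},\mathcal F,\mathcal G}=T_{\mathcal F}\,{\mathcal M}_{2,{\frak m}}\,U_{\mathcal G}$ is a composition of three bijections (the outer two because $\mathcal F$ and $\mathcal G$ are $HS$-Riesz bases, the middle one because $\frak m$ is semi-normalized).
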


The proof of Theorem \ref{7.8.95} below relies on the following remark and proposition.

\begin{remark}\label{parsajoon}
Following \cite[Remark 2.8]{japp}, we say that the $HS$-frame ${\mathcal G}^{gd}=\{{\mathcal G}_i^{gd}\}$ is a
generalized dual $HS$-frame of ${\mathcal G}$, if $T_{\mathcal G} U_{{\mathcal G}^{gd}}$ is invertible.
It is noteworthy that with an argument
similar to the proof of \cite[Theorem 2.1]{japp} and \cite[Theorem 3.1]{von2} one can show that
the generalized
dual $HS$-frames of $\mathcal G$ are precisely the sequences
${\mathcal G}^{gd}$ such that
$${\mathcal G}^{gd}_i={\mathcal G}_iS^{-1}_{\mathcal G}Q+
\pi_i\Psi,\quad\quad\quad(i\in{\Bbb N})$$ where $\Psi$ is a bounded operator in $B({\mathcal K},\oplus{\mathcal C}_2)$ such that $T_{\mathcal G}\Psi=0$, $\pi_i:\oplus{\mathcal C}_2\rightarrow{\mathcal C}_2$ is the standard projection on the $i$-th component and $Q$ is an
invertible operator in $B({\mathcal K})$.
In what follows, the notation $GD({\mathcal G})$ is used to denote the set of all generalized $HS$-duals of $\mathcal G$.
\end{remark}


In the following result and in the sequel ${\mathrm{Inv}}({\mathcal G},{\frak m})$ refers to the set of all $HS$-Bessel sequence ${\mathcal F}$ such that the operator
$\mathbf{M}_{{\frak m},\mathcal
F,\mathcal G}$ is invertible.

\begin{proposition}\label{gelarjoon}
Suppose that $\mathcal G$ is a $HS$-frame for $\mathcal K$ with respect to $\mathcal H$ and that
$\frak m$ is a semi-normalized sequence. Then the mapping
$$\Theta:{\mathrm{Inv}}({\mathcal G},{\frak m})\rightarrow GD({\mathcal G});\quad
\{{\mathcal F}_i\}_{i=1}^\infty\mapsto\{{\mathcal G}_iS_{\mathcal G}^{-1}\mathbf{M}_{{\frak m},\mathcal
F,\mathcal G}+\pi_i{\mathcal M}_{2,\overline{{\frak m}}}\Gamma\}_{i=1}^\infty,$$
is bijective, where $\Gamma$ is the unique operator in $B({\mathcal K},\oplus_2{\mathcal C}_2)$
which satisfies the equality $\mathrm{(\ref{9895})}$.
\end{proposition}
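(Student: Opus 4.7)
The plan is to prove bijectivity of $\Theta$ by constructing an explicit inverse. The ingredients I will need are the characterization of generalized $HS$-duals from Remark~\ref{parsajoon}, the defining formula (\ref{111}) of $\Gamma$ from the proof of Theorem~\ref{main}, the identity $T_{\mathcal G}{\mathcal M}_{2,\overline{{\frak m}}}\Gamma=0$ noted right after that theorem, and Lemma~\ref{sym-m}(2) together with $T_{\mathcal G}U_{\mathcal G}=S_{\mathcal G}$ for the algebraic manipulations.

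Well-definedness is immediate from Remark~\ref{parsajoon}: setting $Q := \mathbf{M}_{{\frak m},{\mathcal F},{\mathcal G}}$ (invertible by the assumption ${\mathcal F}\in{\mathrm{Inv}}({\mathcal G},{\frak m})$) and $\Psi := {\mathcal M}_{2,\overline{{\frak m}}}\Gamma$ (with $T_{\mathcal G}\Psi=0$), the image $\Theta({\mathcal F})_i = {\mathcal G}_iS_{\mathcal G}^{-1}Q + \pi_i\Psi$ fits the required form, so $\Theta({\mathcal F}) \in GD({\mathcal G})$.

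For injectivity I will pass from $\Theta({\mathcal F}_1) = \Theta({\mathcal F}_2)$ to their analysis operators, then apply $T_{\mathcal G}$ to collapse the sum thanks to $T_{\mathcal G}U_{\mathcal G} = S_{\mathcal G}$ and $T_{\mathcal G}{\mathcal M}_{2,\overline{{\frak m}}}\Gamma_j = 0$; this yields $\mathbf{M}_{{\frak m},{\mathcal F}_1,{\mathcal G}} = \mathbf{M}_{{\frak m},{\mathcal F}_2,{\mathcal G}}$. Semi-normalization of $\frak m$ makes ${\mathcal M}_{2,\overline{{\frak m}}}$ invertible (via Lemma~\ref{sym-m}(2)), so $\Gamma_1 = \Gamma_2$. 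Rewriting (\ref{111}) as $U_{{\mathcal F}_j}(\mathbf{M}^{-1}_{{\frak m},{\mathcal F}_j,{\mathcal G}})^{*} = \Gamma_j + {\mathcal M}_{2,1/\overline{{\frak m}}}U_{\mathcal G}S_{\mathcal G}^{-1}$ and multiplying on the right by $\mathbf{M}^{*}_{{\frak m},{\mathcal F}_j,{\mathcal G}}$, I conclude $U_{{\mathcal F}_1} = U_{{\mathcal F}_2}$, whence ${\mathcal F}_1 = {\mathcal F}_2$.

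The main obstacle is surjectivity, where I must build ${\mathcal F} \in {\mathrm{Inv}}({\mathcal G},{\frak m})$ from a given ${\mathcal G}^{gd} \in GD({\mathcal G})$. From Remark~\ref{parsajoon}, I extract $Q := T_{\mathcal G}U_{{\mathcal G}^{gd}}$ (invertible by definition) and $\Psi := U_{{\mathcal G}^{gd}} - U_{\mathcal G}S_{\mathcal G}^{-1}Q$ (so $T_{\mathcal G}\Psi = 0$). Then, by prescribing $\Gamma := {\mathcal M}_{2,1/\overline{{\frak m}}}\Psi$ and inverting (\ref{111}) for $U_{\mathcal F}$ under the constraint $\mathbf{M}_{{\frak m},{\mathcal F},{\mathcal G}} = Q$, the formula
\[
{\mathcal F}_i \;:=\; \pi_i\,{\mathcal M}_{2,1/\overline{{\frak m}}}\bigl(\Psi + U_{\mathcal G}S_{\mathcal G}^{-1}\bigr)Q^{*}
\]
suggests itself. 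The verification uses $U_{\mathcal G}^{*} = T_{\mathcal G}$, ${\mathcal M}_{2,1/\overline{{\frak m}}}^{*} = {\mathcal M}_{2,1/{\frak m}}$, ${\mathcal M}_{2,1/{\frak m}}{\mathcal M}_{2,{\frak m}} = I$, and $\Psi^{*}U_{\mathcal G} = (T_{\mathcal G}\Psi)^{*} = 0$ to give $\mathbf{M}_{{\frak m},{\mathcal F},{\mathcal G}} = U_{\mathcal F}^{*}{\mathcal M}_{2,{\frak m}}U_{\mathcal G} = Q(\Psi^{*}U_{\mathcal G} + S_{\mathcal G}^{-1}T_{\mathcal G}U_{\mathcal G}) = Q$; substituting $U_{\mathcal F}$ back into (\ref{111}) recovers $\Gamma = {\mathcal M}_{2,1/\overline{{\frak m}}}\Psi$, so ${\mathcal M}_{2,\overline{{\frak m}}}\Gamma = \Psi$ and $\Theta({\mathcal F})_i = {\mathcal G}_iS_{\mathcal G}^{-1}Q + \pi_i\Psi = {\mathcal G}_i^{gd}$. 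The real difficulty lies entirely in guessing the correct $\mathcal F$; once the formula is in hand, the rest is routine algebra.
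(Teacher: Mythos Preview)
Your argument is correct and follows the same overall strategy as the paper: verify well-definedness via Remark~\ref{parsajoon}, then establish surjectivity by writing down an explicit preimage $\mathcal F$ of a given $\mathcal G^{gd}=\{\mathcal G_iS_{\mathcal G}^{-1}Q+\pi_i\Psi\}$ and checking that $\mathbf M_{{\frak m},\mathcal F,\mathcal G}=Q$ and $\Gamma=\mathcal M_{2,1/\overline{\frak m}}\Psi$ via~(\ref{111}).

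Two points are worth noting. First, the paper dismisses injectivity as obvious and only treats surjectivity; your explicit injectivity argument (apply $T_{\mathcal G}$ to kill the $\Gamma$-term, deduce equality of the multipliers, then of the $\Gamma$'s, then of $U_{\mathcal F}$ from~(\ref{111})) is a genuine addition. Second, your preimage formula $U_{\mathcal F}=\mathcal M_{2,1/\overline{\frak m}}(\Psi+U_{\mathcal G}S_{\mathcal G}^{-1})Q^{*}$ differs from the paper's choice $\mathcal F_i=(1/\overline{\frak m}_i)\mathcal G_iS_{\mathcal G}^{-1}Q^{*}+\pi_i\mathcal M_{2,1/\overline{\frak m}}\Psi$ by the presence of $Q^{*}$ on the $\Psi$-term. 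Your version is the one that actually works: with the paper's $\mathcal F$ one computes $\Gamma=\mathcal M_{2,1/\overline{\frak m}}\Psi(Q^{-1})^{*}$ rather than $\mathcal M_{2,1/\overline{\frak m}}\Psi$ (the paper's displayed computation silently drops a $(\mathbf M^{-1}_{{\frak m},\mathcal F,\mathcal G})^{*}$ from the $\Psi$-summand), so $\Theta$ of the paper's $\mathcal F$ lands on $\{\mathcal G_iS_{\mathcal G}^{-1}Q+\pi_i\Psi(Q^{-1})^{*}\}$ instead of $\mathcal G^{gd}$. Your formula repairs this.
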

\begin{proof}
It obviously suffices to show that $\Theta$ is an onto map. To this end, suppose that
${\mathcal G}^{gd}$ is a generalized $HS$-dual of $\mathcal G$. Then, by Remark \ref{parsajoon}, we would have a bounded operator in $B({\mathcal K},\oplus{\mathcal C}_2)$ and an invertible operator $Q$ in $B({\mathcal K})$ such that $T_{\mathcal G}\Psi=0$ and
$${\mathcal G}_i^{gd}={\mathcal G}_iS^{-1}_{\mathcal G}Q+
\pi_i\Psi,\quad\quad\quad(i\in{\Bbb N}).$$
Letting ${\mathcal F}_i:{\mathcal K}\rightarrow{\mathcal C}_2$ by
$${\mathcal F}_i=(1/\overline{{\frak m}}_i){\mathcal G}_iS_{\mathcal G}^{-1}Q^*+\pi_i({\mathcal M}_{2,1/\overline{{\frak m}}}\Psi).$$
Then ${\mathcal F}$ is a $HS$-Bessel sequence and, in particular, we have
$Q=\mathbf{M}_{{\frak m},{\mathcal
F},\mathcal G}$. Hence, ${\mathcal F}$ is in ${\mathrm{Inv}}({\mathcal G},{\frak m})$.
On the other hand, if $\Gamma$ is the unique operator which defined by ({\ref{111}}), then, for each $f\in{\mathcal K}$, we have
\begin{align*}
\Gamma(f)&=U_{{\mathcal F}}(\mathbf{M}^{-1}_{{\frak m},\mathcal
F,\mathcal G})^*(f)-{\mathcal M}_{2,1/\overline{\frak m}}U_{\mathcal G} S^{-1}_{\mathcal G}(f)\\
&={\mathcal M}_{2,1/\overline{\frak m}}U_{\mathcal G} S^{-1}_{\mathcal G}Q^*(\mathbf{M}^{-1}_{{\frak m},\mathcal
F,\mathcal G})^*(f)+{\mathcal M}_{2,1/\overline{\frak m}}\Psi(f)-{\mathcal M}_{2,1/\overline{\frak m}}U_{\mathcal G} S^{-1}_{\mathcal G}(f)\\
&={\mathcal M}_{2,1/\overline{\frak m}}\Psi(f).
\end{align*}
It follows that $\Theta({\mathcal F})={\mathcal G}^{gd}$.
\end{proof}


\begin{theorem}\label{7.8.95}
Let $\mathcal G$ be a $HS$-frame for $\mathcal K$ with respect to $\mathcal H$ and let ${\mathcal G}'$ be another $HS$-frame for $\mathcal K$ with respect to $\mathcal H$ such that $\|T_{\mathcal G}-T_{{\mathcal G}'}\|<\sqrt{A_{\mathcal G}}/2$, where $A_{\mathcal G}$ is the lower frame bound of $\mathcal G$. If $\frak m$ is semi-normalized, then there exists a one-to-one correspondence between
${\mathrm{Inv}}({\mathcal G},{\frak m})$ and ${\mathrm{Inv}}({\mathcal G}',{\frak m})$.
\end{theorem}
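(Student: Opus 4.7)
The plan is to reduce to Proposition~\ref{gelarjoon}. That proposition provides bijections $\Theta_{\mathcal G}:\mathrm{Inv}(\mathcal G,{\frak m})\to GD(\mathcal G)$ and $\Theta_{\mathcal G'}:\mathrm{Inv}(\mathcal G',{\frak m})\to GD(\mathcal G')$, so it is enough to produce a bijection $\Omega:GD(\mathcal G)\to GD(\mathcal G')$; the desired correspondence will then be $\Theta_{\mathcal G'}^{-1}\circ\Omega\circ\Theta_{\mathcal G}$. By Remark~\ref{parsajoon}, an element of $GD(\mathcal G)$ is uniquely parametrized by a pair $(Q,\Psi)$ with $Q\in B(\mathcal K)$ invertible and $\Psi\in B(\mathcal K,\oplus\mathcal C_2)$ satisfying $T_{\mathcal G}\Psi=0$, via $\mathcal G_i^{gd}=\mathcal G_iS_{\mathcal G}^{-1}Q+\pi_i\Psi$, and analogously for $\mathcal G'$. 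Identifying $\{\Psi\in B(\mathcal K,\oplus\mathcal C_2):T_{\mathcal G}\Psi=0\}$ with $B(\mathcal K,\ker T_{\mathcal G})$, it therefore suffices to exhibit a linear isomorphism $J:\ker T_{\mathcal G}\to\ker T_{\mathcal G'}$ of these closed subspaces of the Hilbert space $\oplus\mathcal C_2$; then $\Omega$ is defined by $(Q,\Psi)\mapsto(Q,J\circ\Psi)$.

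The construction of $J$ is exactly where the closeness hypothesis is used. Set $\Delta:=T_{\mathcal G'}-T_{\mathcal G}$; the estimate $\|U_{\mathcal G'}(f)\|\geq\|U_{\mathcal G}(f)\|-\|\Delta^{*}\|\|f\|\geq(\sqrt{A_{\mathcal G}}-\|\Delta\|)\|f\|$ together with $\|\Delta\|<\sqrt{A_{\mathcal G}}/2$ yields the inherited lower frame bound $A_{\mathcal G'}>A_{\mathcal G}/4$. Let $P$ and $P'$ denote the orthogonal projections of $\oplus\mathcal C_2$ onto $\ker T_{\mathcal G}$ and $\ker T_{\mathcal G'}$, so that $I-P=U_{\mathcal G}S_{\mathcal G}^{-1}T_{\mathcal G}$ and similarly for $P'$; a direct computation using $\|U_{\mathcal G}S_{\mathcal G}^{-1}y\|^2=\langle y,S_{\mathcal G}^{-1}y\rangle$ gives $\|U_{\mathcal G}S_{\mathcal G}^{-1}\|\leq 1/\sqrt{A_{\mathcal G}}$ and $\|U_{\mathcal G'}S_{\mathcal G'}^{-1}\|\leq 1/\sqrt{A_{\mathcal G'}}$. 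If $x\in\ker T_{\mathcal G'}$ satisfies $Px=0$, then $x=(I-P)x=U_{\mathcal G}S_{\mathcal G}^{-1}T_{\mathcal G}(x)=-U_{\mathcal G}S_{\mathcal G}^{-1}\Delta(x)$, since $T_{\mathcal G}(x)=-\Delta(x)$, which yields $\|x\|\leq(\|\Delta\|/\sqrt{A_{\mathcal G}})\|x\|<\tfrac{1}{2}\|x\|$ and hence $x=0$. The symmetric argument, noting that $\|\Delta\|/\sqrt{A_{\mathcal G'}}<2\|\Delta\|/\sqrt{A_{\mathcal G}}<1$, shows that $P'|_{\ker T_{\mathcal G}}$ is injective as well. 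Consequently $\dim\ker T_{\mathcal G}=\dim\ker T_{\mathcal G'}$ as Hilbert-space dimensions, so any unitary between these subspaces provides the required isomorphism $J$.

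The principal obstacle is the symmetric injectivity estimate, which requires first securing the quantitative inheritance $A_{\mathcal G'}>A_{\mathcal G}/4$ and then deducing the strict inequality $\|\Delta\|/\sqrt{A_{\mathcal G'}}<1$; both inputs rely on the factor $1/2$ in the hypothesis $\|T_{\mathcal G}-T_{\mathcal G'}\|<\sqrt{A_{\mathcal G}}/2$, which is sharper than the perturbation bound $\sqrt{A_{\mathcal G}}$ that only guarantees $\mathcal G'$ to be a frame. With $J$ in hand, the bijection $\mathrm{Inv}(\mathcal G,{\frak m})\to\mathrm{Inv}(\mathcal G',{\frak m})$ is given explicitly by $\mathcal F\mapsto\Theta_{\mathcal G'}^{-1}\bigl(\mathbf{M}_{{\frak m},\mathcal F,\mathcal G},\,J\circ\mathcal M_{2,\overline{\frak m}}\Gamma\bigr)$, where $\Gamma$ is the unique operator defined in \eqref{111}.
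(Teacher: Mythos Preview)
Your reduction via Proposition~\ref{gelarjoon} to a bijection $GD(\mathcal G)\to GD(\mathcal G')$ is exactly the paper's first step. From there the two arguments diverge. The paper writes down a concrete map
\[
\Lambda:\mathcal G^{gd}\longmapsto\bigl\{\mathcal G_i'S_{\mathcal G'}^{-1}T_{\mathcal G}U_{\mathcal G^{gd}}+\pi_iP_{\ker T_{\mathcal G'}}U_{\mathcal G^{gd}}\bigr\}_{i=1}^\infty
\]
and appeals to \cite[Proposition~4.15]{gita} for its bijectivity; in the parametrization of Remark~\ref{parsajoon} this map keeps the invertible part $Q=T_{\mathcal G}U_{\mathcal G^{gd}}$ and sends $\Psi$ to $P_{\ker T_{\mathcal G'}}(U_{\mathcal G}S_{\mathcal G}^{-1}Q+\Psi)$, so the two components interact. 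Your route is instead to observe that the parametrization $(Q,\Psi)$ is unique, freeze the $Q$-component, and reduce everything to producing a linear isomorphism $J:\ker T_{\mathcal G}\to\ker T_{\mathcal G'}$, which you obtain from a perturbation estimate showing both restricted projections are bounded below and hence $\dim\ker T_{\mathcal G}=\dim\ker T_{\mathcal G'}$ as Hilbert dimensions.

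Your argument is correct and fully self-contained: the bounds $\|U_{\mathcal G}S_{\mathcal G}^{-1}\|\le A_{\mathcal G}^{-1/2}$, the inherited lower bound $A_{\mathcal G'}>(\sqrt{A_{\mathcal G}}-\|\Delta\|)^2>A_{\mathcal G}/4$, and the resulting strict contractions $\|\Delta\|/\sqrt{A_{\mathcal G}}<1/2$ and $\|\Delta\|/\sqrt{A_{\mathcal G'}}<1$ are all valid, and they give not just injectivity but bounded-below maps, so the dimension comparison goes through even without invoking separability. What you gain over the paper is independence from the external reference \cite{gita} and a transparent explanation of where the factor $1/2$ in the hypothesis is consumed. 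What the paper's construction buys is an explicit, canonically defined $\Lambda$ rather than an arbitrary unitary $J$; your final correspondence $\mathcal F\mapsto\Theta_{\mathcal G'}^{-1}(\mathbf M_{{\frak m},\mathcal F,\mathcal G},\,J\circ\mathcal M_{2,\overline{\frak m}}\Gamma)$ depends on this non-canonical choice.
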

\begin{proof}
By Proposition \ref{gelarjoon} above, it will be enough to prove that
there exists a one-to-one correspondence between $GD({\mathcal G})$ and $GD({\mathcal G}')$. To this end, we define the map $\Lambda$ from $GD({\mathcal G})$ into $GD({\mathcal G}')$ by
$${\mathcal G}^{gd}\mapsto\{{\mathcal G}_i'S_{{\mathcal G}'}T_{\mathcal G}U_{{\mathcal G}^{gd}}+\pi_iP_{\ker(T_{{\mathcal G}'})}U_{{\mathcal G}^{gd}}\}_{i=1}^\infty,$$
where $P_{\ker(T_{{\mathcal G}'})}$
denotes the orthogonal projection of $\oplus{\mathcal C}_2$ onto ${\ker(T_{{\mathcal G}'})}$. Now, with an argument
similar to the proof of Proposition 4.15 in \cite{gita} one can show that
$\Lambda$ is bijective and this completes the proof.
\end{proof}


The next result characterizes another invertible $HS$-frame multipliers $\mathbf{M}_{{\frak m},\mathcal
F,\mathcal G} $ whose inverses can be written as $\mathbf{M}_{1/{\frak m},\widetilde{\mathcal G},\widetilde{\mathcal F}}$. In details, the following result is a generalization of a result proved by
Balazs and Stoeva \cite[Theorem 4.6]{balaz11} to $HS$-frames as well as $g$-frames. To this end, we need the following remark.

\begin{remark}\label{equivalent}
Suppose that ${\mathcal G}$ and $\mathcal F$ are $HS$-frames
for $\mathcal K$ with respect to $\mathcal H$, and
that the symbol $\frak m$ is semi--normalized.
\begin{enumerate}
\item If the $HS$-frames $\overline{{\frak m}}{\mathcal F}
:=\{\overline{{\frak m}}_i{\mathcal F}_i\}_{i=1}^\infty$ and $\mathcal G$
are equivalent; that is, there exists an invertible operator $Q$ in $B(\mathcal K)$ such that $\overline{{\frak m}}_i{\mathcal F}_i={\mathcal G}_iQ$ ($i\in{\Bbb N}$), then
$\mathbf{M}_{{\frak m},\mathcal
F,\mathcal G}$ is invertible and
$$\mathbf{M}^{-1}_{{\frak m},\mathcal
F,\mathcal G}=\mathbf{M}_{1/{\frak m},\widetilde{\mathcal G},{{\mathcal F}^d}},$$
for all dual $HS$-frames ${\mathcal F}^d$ of $\mathcal F$. Indeed, on the one hand we have
$$
\mathbf{M}_{{\frak m},\mathcal F,\mathcal G}=T_{\mathcal F}{\mathcal M}_{2,{\frak m}}U_{\mathcal G}
=T_{\overline{{\frak m}}{\mathcal F}}U_{\mathcal G}=Q^*S_{\mathcal G},
$$
and on the other hand, if the letter ${\mathcal F}Q^{-1}$ respectively $(1/\overline{{\frak m}}){\mathcal G}$ refer to the $HS$-Bessel sequence $\{{\mathcal F}_iQ^{-1}\}_{i=1}^\infty$ respectively $\{(1/\overline{{\frak m}}_i){\mathcal G}_i\}_{i=1}^\infty$, then we observe that
\begin{align*}
\mathbf{M}_{1/{\frak m},\widetilde{\mathcal G},{\mathcal F}^d}&=T_{\widetilde{\mathcal G}}{\mathcal M}_{2,1/{\frak m}}U_{{\mathcal F}^d}\\
&=S^{-1}_{\mathcal G}T_{(1/\overline{{\frak m}}){\mathcal G}}U_{{\mathcal F}^d}\\
&=S^{-1}_{\mathcal G}T_{{\mathcal F}Q^{-1}}U_{{\mathcal F}^d}\\
&=(Q^*S_{\mathcal G})^{-1}.
\end{align*}
Conversely, if
$\mathbf{M}_{{\frak m},\mathcal
F,\mathcal G}$ is invertible and
$\mathbf{M}^{-1}_{{\frak m},\mathcal
F,\mathcal G}=\mathbf{M}_{1/{\frak m},\widetilde{\mathcal G},{{\mathcal F}^d}},$
for all dual $HS$-frames ${\mathcal F}^d$ of $\mathcal F$, then Theorem \ref{main}
implies that $\Gamma^*U_{{\mathcal F}^d}=0$. Now, with an argument
similar to the proof of Theorem \ref{main} one can conclude that
$\Gamma=0$. From this, by Eq. (\ref{111}), we deduce
that
$$\overline{{\frak m}}_i{\mathcal F}_i={\mathcal G}_iS^{-1}_{\mathcal G}\mathbf{M}_{\overline{{\frak m}},\mathcal
G,\mathcal F}\quad\quad\quad(i\in{\Bbb N}),$$
and thus the $HS$-frames $\mathcal G$ and $\overline{{\frak m}}{\mathcal F}$ are
equivalent. Hence, we can give the interpretation below for equivalent $HS$-frames:\\
``{\it If $\frak m$ is semi--normalized, then $\mathbf{M}_{{\frak m},\mathcal
F,\mathcal G}$ is invertible and
$\mathbf{M}^{-1}_{{\frak m},\mathcal
F,\mathcal G}=\mathbf{M}_{1/{\frak m},\widetilde{\mathcal G},{{\mathcal F}^d}},$
for all dual $HS$-frames ${\mathcal F}^d$ of $\mathcal F$ if and only if
the $HS$-frames $\overline{{\frak m}}{\mathcal F}:=\{\overline{{\frak m}}_i{\mathcal F}_i\}_{i=1}^\infty$ and $\mathcal G$ are equivalent.}"
\item  If $\mathcal Y$
denotes any one of the $HS$-frames $\mathcal G$ and $\mathcal F$, then it is easy to check that
$$\oplus{\mathcal C}_2={{\mathrm{ran}}}(U_{\mathcal Y})\oplus\ker( T_{\mathcal Y})$$ and thus $P_{\ker
(T_{\mathcal Y})}+P_{{{\mathrm{ran}}}(U_{\mathcal Y})}=Id_{\oplus{\mathcal C}_2}$, where $P_X$
denotes the orthogonal projection of $\oplus{\mathcal C}_2$ onto $X$. In
particular, we have
$$P_{{{\mathrm{ran}}}(U_{\mathcal G})}=U_{\mathcal G} T_{\widetilde{{\mathcal G}}}\quad\quad\quad{\hbox{and}}\quad\quad\quad P_{{{\mathrm{ran}}}(U_{\widetilde{{\mathcal F}}})}=U_{\widetilde{{\mathcal F}}} T_{\mathcal F}.$$
\end{enumerate}
\end{remark}

The following proposition is now immediate.


\begin{proposition}\label{main}
Suppose that ${\mathcal G}$ and $\mathcal F$ are $HS$-frames
for $\mathcal K$ with respect to $\mathcal H$, and
that there exists a non-zero constant $c$ such that ${\frak m}_i={c}$ for all $i\in {\Bbb N}$.
Then the following statements are equivalent.
\begin{enumerate}
\item\label{condition1} $\mathbf{M}_{{\frak m},\mathcal
F,\mathcal G}$ is invertible and
$\mathbf{M}^{-1}_{{\frak m},\mathcal
F,\mathcal G}=\mathbf{M}_{1/{\frak m},\widetilde{\mathcal G},\widetilde{\mathcal F}}.$
\item $\mathcal F$ and $\mathcal G$ are equivalent $HS$-frames.
\end{enumerate}
\end{proposition}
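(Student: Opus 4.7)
The plan is to treat the two implications separately. For (2) $\Rightarrow$ (1), invoke Remark \ref{equivalent}(1) directly: if $\mathcal F_i = \mathcal G_i Q$ for some invertible $Q \in B(\mathcal K)$, then $\overline{{\frak m}}_i \mathcal F_i = \mathcal G_i(\bar c Q)$ with $\bar c Q$ still invertible, so the $HS$-frames $\overline{{\frak m}}\mathcal F$ and $\mathcal G$ are equivalent. That remark then asserts that $\mathbf{M}_{{\frak m},\mathcal F,\mathcal G}$ is invertible with $\mathbf{M}^{-1}_{{\frak m},\mathcal F,\mathcal G} = \mathbf{M}_{1/{\frak m},\widetilde{\mathcal G},\mathcal F^d}$ for every dual $HS$-frame $\mathcal F^d$ of $\mathcal F$; specializing to $\mathcal F^d = \widetilde{\mathcal F}$ is exactly (1).

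For (1) $\Rightarrow$ (2), I would unfold the identity $\mathbf{M}_{{\frak m},\mathcal F,\mathcal G}\,\mathbf{M}_{1/{\frak m},\widetilde{\mathcal G},\widetilde{\mathcal F}} = Id_{\mathcal K}$. Writing $\mathbf{M}_{{\frak m},\mathcal F,\mathcal G} = c\,T_{\mathcal F}U_{\mathcal G}$ and using $T_{\widetilde{\mathcal G}} = S_{\mathcal G}^{-1}T_{\mathcal G}$, $U_{\widetilde{\mathcal F}} = U_{\mathcal F}S_{\mathcal F}^{-1}$, together with the projection identity $U_{\mathcal G}T_{\widetilde{\mathcal G}} = P_{{\mathrm{ran}}(U_{\mathcal G})}$ from Remark \ref{equivalent}(2), the identity reduces to $T_{\mathcal F}P_{{\mathrm{ran}}(U_{\mathcal G})}U_{\mathcal F} = T_{\mathcal F}U_{\mathcal F}$, equivalently $T_{\mathcal F}P_{\ker(T_{\mathcal G})}U_{\mathcal F} = 0$. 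Since $T_{\mathcal F} = U_{\mathcal F}^*$ and $P := P_{\ker(T_{\mathcal G})}$ is an orthogonal projection, for every $f \in \mathcal K$ one has $\|PU_{\mathcal F}f\|^2 = \langle U_{\mathcal F}^*PU_{\mathcal F}f,f\rangle = 0$, forcing $PU_{\mathcal F} = 0$ and therefore ${\mathrm{ran}}(U_{\mathcal F}) \subseteq {\mathrm{ran}}(U_{\mathcal G})$ (the latter is closed because $\mathcal G$ is a frame). Running the same argument on $\mathbf{M}_{1/{\frak m},\widetilde{\mathcal G},\widetilde{\mathcal F}}\,\mathbf{M}_{{\frak m},\mathcal F,\mathcal G} = Id_{\mathcal K}$ yields the reverse inclusion, hence ${\mathrm{ran}}(U_{\mathcal F}) = {\mathrm{ran}}(U_{\mathcal G})$.

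To finish, set $Q := T_{\widetilde{\mathcal G}}U_{\mathcal F} \in B(\mathcal K)$. The inclusion ${\mathrm{ran}}(U_{\mathcal F}) \subseteq {\mathrm{ran}}(U_{\mathcal G})$ gives $U_{\mathcal G}Q = P_{{\mathrm{ran}}(U_{\mathcal G})}U_{\mathcal F} = U_{\mathcal F}$, which read coordinatewise is $\mathcal G_iQ = \mathcal F_i$. Symmetrically, $R := T_{\widetilde{\mathcal F}}U_{\mathcal G}$ satisfies $\mathcal F_iR = \mathcal G_i$; combining these with the injectivity of $U_{\mathcal G}$ and $U_{\mathcal F}$ (inherited from the lower frame bounds) gives $QR = RQ = Id_{\mathcal K}$, so $Q$ is invertible and $\mathcal F$ and $\mathcal G$ are equivalent. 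The main obstacle I anticipate is the algebraic reduction in the preceding paragraph: carefully routing the two factors of the multiplier through the canonical-dual formulas and the projection identity of Remark \ref{equivalent}(2) to land at the positivity statement $U_{\mathcal F}^*P_{\ker(T_{\mathcal G})}U_{\mathcal F} = 0$; once that step is settled, the remainder of the argument is purely formal.
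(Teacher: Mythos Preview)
Your proof is correct. Both your argument and the paper's share the same architecture for (1) $\Rightarrow$ (2): reduce the inverse identity, via the projection formula $U_{\mathcal G}T_{\widetilde{\mathcal G}}=P_{{\mathrm{ran}}(U_{\mathcal G})}$ of Remark \ref{equivalent}(2), to the range equality ${\mathrm{ran}}(U_{\mathcal F})={\mathrm{ran}}(U_{\mathcal G})$, and then deduce equivalence of the frames from that. The differences are in the execution of each half.

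For the range equality, the paper composes to obtain $P_{{\mathrm{ran}}(U_{\mathcal G})}P_{{\mathrm{ran}}(U_{\widetilde{\mathcal F}})}P_{{\mathrm{ran}}(U_{\mathcal G})}=P_{{\mathrm{ran}}(U_{\mathcal G})}$ and reads off one inclusion from the general projection fact $PQP=P\Rightarrow{\mathrm{ran}}(P)\subseteq{\mathrm{ran}}(Q)$; you instead isolate $U_{\mathcal F}^*P_{\ker(T_{\mathcal G})}U_{\mathcal F}=0$ and use positivity to kill $P_{\ker(T_{\mathcal G})}U_{\mathcal F}$. These are two phrasings of the same phenomenon. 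For the passage from ${\mathrm{ran}}(U_{\mathcal F})={\mathrm{ran}}(U_{\mathcal G})$ to equivalence, the paper follows Balan's route (normalize to Parseval $HS$-frames $\mathcal G',\mathcal F'$, build the isometry $T_{\mathcal G'}U_{\mathcal F'}$, compute $\sum_i\|\mathcal G'_iQ(f)-\mathcal F'_i(f)\|_{\mathcal C_2}^2=0$, and check surjectivity via $\ker(Q^*)=\{0\}$). Your construction of $Q=T_{\widetilde{\mathcal G}}U_{\mathcal F}$ together with the symmetric $R=T_{\widetilde{\mathcal F}}U_{\mathcal G}$, and the observation $U_{\mathcal G}QR=U_{\mathcal F}R=U_{\mathcal G}$ (with $U_{\mathcal G}$ injective), is a more direct and slightly shorter way to reach the same conclusion; it bypasses the Parseval normalization entirely. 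Either approach is fine, and your version has the virtue of producing the intertwining operator in one line.
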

\begin{proof}
We first note that, without loss of generality, we may consider $c=1$.
The necessity of the condition ``$\mathcal F$ and $\mathcal G$ are equivalent $HS$-frames" follows from part (1) of Remark \ref{equivalent}. We prove its sufficiency.
To this end, suppose that the condition (\ref{condition1}) is satisfied. From this, we observe that
$$U_{\mathcal G}T_{\widetilde{\mathcal G}}=U_{\mathcal G}\mathbf{M}_{1/{\frak m},\widetilde{\mathcal G},\widetilde{\mathcal F}}\mathbf{M}_{{\frak m},\mathcal
F,\mathcal G}T_{\widetilde{\mathcal G}}=U_{\mathcal G}T_{\widetilde{\mathcal G}}U_{\widetilde{\mathcal F}}T_{\mathcal F}U_{\mathcal G}T_{\widetilde{\mathcal G}}.$$
We now invoke part (2) of Remark \ref{equivalent} to conclude that $$P_{{\mathrm{ran}}(U_{\mathcal G})}P_{{\mathrm{ran}}(U_{\widetilde{\mathcal F}})}P_{{\mathrm{ran}}(U_{\mathcal G})}=P_{{\mathrm{ran}}(U_{\mathcal G})}.$$
This says that ${{\mathrm{ran}}(U_{\widetilde{\mathcal F}})}\subseteq {{\mathrm{ran}}(U_{\mathcal G})}$ and thus
$${{\mathrm{ran}}(U_{{\mathcal F}})}\subseteq {{\mathrm{ran}}(U_{\mathcal G})}.$$
Analogously, one can show that the reverse inclusion is also true. It follows that ${\mathrm{ran}}(U_{\mathcal F})={\mathrm{ran}}(U_{\mathcal G})$. Now, we follow the proof of \cite[Lemma 2.1]{balan} to show that there exists an invertible operator $Q\in B({\mathcal K})$ such that
${\mathcal G}_i={\mathcal F}_iQ$ ($i\in {\Bbb N}$).
To this end, suppose that
${\mathcal G}'=\{{\mathcal G}_iS_{{\mathcal G}}^{-{\frac{1}{2}}}\}$ and ${\mathcal F}'=\{{\mathcal F}_iS_{{\mathcal F}}^{-{\frac{1}{2}}}\}$.
Observe that ${\mathcal G}'$ and ${\mathcal F}'$ are $HS$-frames with lower and upper frame bounds 1 and thus $S_{{\mathcal G}'}=Id_{{\mathcal K}}=S_{{\mathcal F}'}$. Moreover, it is not hard to check that ${\mathrm{ran}}(U_{{\mathcal F}'})={\mathrm{ran}}(U_{{\mathcal G}'})$
and thus
$$
U_{{\mathcal G}'}T_{{\mathcal G}'}=P_{{\mathrm{ran}}(U_{{\mathcal G}'})}=
P_{{\mathrm{ran}}(U_{{\mathcal F}'})}=U_{{\mathcal F}'}T_{{\mathcal F}'}.$$
Hence, if we set $Q=T_{{\mathcal G}'}U_{{\mathcal F}'}$, then
$$Q^*Q=T_{{\mathcal F}'}U_{{\mathcal G}'}T_{{\mathcal G}'}U_{{\mathcal F}'}=Id_{\mathcal K}.$$
This says that $Q$ is an isometry, $Q^*(f)=\sum_{i=1}^\infty{{\mathcal F}'_i}^*{\mathcal G}'_i(f)$ and
$$f=\sum_{i=1}^\infty{{\mathcal F}'_i}^*{\mathcal G}'_iQ(f)\quad\quad\quad\quad\quad(f\in{\mathcal K}).$$
Now, for each $f\in{\mathcal K}$, we have
\begin{align*}
\sum_{i=1}^\infty\|{\mathcal G}'_iQ(f)-{\mathcal F}'_i(f)\|^2_{{\mathcal C}_2}&=
\sum_{i=1}^\infty\|{\mathcal G}'_iQ(f)\|^2_{{\mathcal C}_2}+\sum_{i=1}^\infty\|{\mathcal F}'_i(f)\|^2_{{\mathcal C}_2}\\&-2Re{\Big(}{\Big[}\sum_{i=1}^\infty{{\mathcal F}'_i}^*{\mathcal G}'_iQ,f{\Big]}_{\mathbf{tr}}{\Big)}\\
&=\|Q(f)\|^2-\|f\|^2=0
\end{align*}
It follows that ${\mathcal G}'_iQ={\mathcal F}'_i$ ($i\in{\Bbb N}$).
Having reached this state it remains to prove that $Q$ is onto or equivalently $\ker(Q^*)=\{0\}$. To this end, suppose that $g\in \ker(Q^*)$. Hence, $U_{{\mathcal G}'}(g)\in\ker(T_{{\mathcal F}'})$ and thus, since
$$\ker(T_{{\mathcal F}'})={\mathrm{ran}}(U_{{\mathcal F}'})^\perp={\mathrm{ran}}(U_{{\mathcal G}'})^\perp=\ker(T_{{\mathcal G}'}),$$
we can deduce that $U_{{\mathcal G}'}(g)\in\ker(T_{{\mathcal G}'})$. It follows that
$g=T_{{\mathcal G}'}U_{{\mathcal G}'}(g)=0$. We have now completed the proof of the
proposition.
\end{proof}


We conclude this work by the following result which is of interest in its own right.
In details, as an another application of Theorem \ref{main}, we have the following
surprising new results about dual $HS$-frames as well as dual $g$-frames which shows that a $HS$-frame [respectively, $g$-frame] is uniquely determined by the set of its dual $HS$-frames [respectively, $g$-frames].

\begin{theorem}
Let $\mathcal G$ and $\mathcal F$ be $HS$-frames for $\mathcal K$ with respect to $\mathcal H$. If every dual $HS$-frame ${\mathcal G}^d$ of $\mathcal G$ is a dual $HS$-frame of $\mathcal F$, then ${\mathcal G}={\mathcal F}$.
\end{theorem}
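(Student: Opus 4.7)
The plan is to exploit the hypothesis by running through a sufficiently rich family of duals of $\mathcal G$. The key observation, patterned on the construction used in the proof of Theorem \ref{main}, is that every $HS$-Bessel sequence $\mathcal W=\{\mathcal W_j\}_{j=1}^\infty$ for $\mathcal K$ with respect to $\mathcal H$ yields a dual $HS$-frame
$${\mathcal G}^d_{\mathcal W}:=\{{\mathcal G}_j S_{\mathcal G}^{-1}+{\mathcal W}_j-{\mathcal G}_j S_{\mathcal G}^{-1}T_{\mathcal G}U_{\mathcal W}\}_{j=1}^\infty$$
of $\mathcal G$; indeed, $T_{\mathcal G}U_{{\mathcal G}^d_{\mathcal W}}=Id_{\mathcal K}$ by a one-line computation, and the lower frame bound for ${\mathcal G}^d_{\mathcal W}$ follows automatically from this identity together with the upper frame bound of $\mathcal G$.

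By hypothesis we therefore have $T_{\mathcal F}U_{{\mathcal G}^d_{\mathcal W}}=Id_{\mathcal K}$ for every such $\mathcal W$. Taking $\mathcal W=0$ first shows that ${\mathcal G}^d_0=\widetilde{\mathcal G}$ is itself a dual of $\mathcal F$, i.e.\ $T_{\mathcal F}U_{\widetilde{\mathcal G}}=Id_{\mathcal K}$. Feeding an arbitrary $\mathcal W$ back into the hypothesis and expanding,
$$Id_{\mathcal K}=T_{\mathcal F}U_{\widetilde{\mathcal G}}+T_{\mathcal F}U_{\mathcal W}-T_{\mathcal F}U_{\widetilde{\mathcal G}}T_{\mathcal G}U_{\mathcal W}=Id_{\mathcal K}+T_{\mathcal F}U_{\mathcal W}-T_{\mathcal G}U_{\mathcal W},$$
which collapses to the clean identity $T_{\mathcal F}U_{\mathcal W}=T_{\mathcal G}U_{\mathcal W}$ for every $HS$-Bessel sequence $\mathcal W$.

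The last step is to apply this identity to Bessel sequences localised enough to separate the individual frame members, borrowing the test sequences from the proof of Theorem \ref{main}. For fixed $i,k\in{\Bbb N}$ and a unit vector $e'_1\in\mathcal K$, define ${\mathcal W}^{(i,k)}=\{{\mathcal W}^{(i,k)}_j\}_{j=1}^\infty$ by ${\mathcal W}^{(i,k)}_j(f):=\langle f,e'_1\rangle\,\delta_{i,j}{\mathcal E}_k$; this is an $HS$-Bessel sequence and $U_{{\mathcal W}^{(i,k)}}(f)=\langle f,e'_1\rangle F_{i,k}$. Using $T_{\mathcal Y}(F_{i,k})={\mathcal Y}_i^*({\mathcal E}_k)$ for $\mathcal Y\in\{\mathcal G,\mathcal F\}$ and evaluating $T_{\mathcal F}U_{{\mathcal W}^{(i,k)}}=T_{\mathcal G}U_{{\mathcal W}^{(i,k)}}$ at $f=e'_1$ yields ${\mathcal F}_i^*({\mathcal E}_k)={\mathcal G}_i^*({\mathcal E}_k)$ for every $i,k$. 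Since $\{{\mathcal E}_k\}$ is an orthonormal basis of ${\mathcal C}_2$ by Remark \ref{basis}, this forces ${\mathcal F}_i^*={\mathcal G}_i^*$, hence ${\mathcal F}_i={\mathcal G}_i$ for all $i$. The only conceptual obstacle is recognising that the parametrization of duals already implicit in the proof of Theorem \ref{main} is rich enough to test the two synthesis operators against every basis element $F_{i,k}$ of $\oplus{\mathcal C}_2$; once this is in place the rest is a direct computation.
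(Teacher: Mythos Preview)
Your proof is correct and follows essentially the same approach as the paper's. The paper simply observes that the hypothesis gives $(T_{\mathcal G}-T_{\mathcal F})U_{{\mathcal G}^d}=0$ for every dual ${\mathcal G}^d$ of $\mathcal G$ and then invokes ``an argument similar to the proof of Theorem \ref{main}'' to conclude $T_{\mathcal G}=T_{\mathcal F}$; you have written out exactly that argument in full, using the same parametrized family of duals and the same localised test sequences against the basis elements $F_{i,k}$.
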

\begin{proof}
The assumption implies that $(T_{\mathcal G}-T_{\mathcal F})U_{{\mathcal G}^d}=0$ for all dual $HS$-frames ${\mathcal G}^d$ of $\mathcal G$. Hence, with an argument
similar to the proof of Theorem \ref{main} one can show that $T_{\mathcal G}=T_{\mathcal F}$. Whence ${\mathcal G}={\mathcal F}$.
\end{proof}

\end{document}